\newtheorem{theorem}{Theorem}[section]
\newtheorem{lemma}[theorem]{Lemma}
\newtheorem{corollary}[theorem]{Corollary}
\newcommand\np{{\mbox{NP}}}
\newcommand\nee{{\mbox{NE}}}
\newcommand\ee{{\mbox{E}}}
\newcommand\pp{{\mbox{P}}}
\newcommand\npco{{\np \cap \mbox{co}\np}}
\newcommand\neco{{\nee \cap \mbox{co}\nee}}
\newcommand\npcpp{{(\npco)/\mbox{poly}}}
\newcommand\nw{{{NW}_{A_n,f}}}
\newcommand\tpv{\mbox{T}_{PV}}
\newcommand\ufla{\Psi_{L,{\cal F}, {\cal A}, k_0}}
\newcommand\sat{{\sf sat}_k}
\newcommand\prov{{{\sf prov}_Q^{k^c}}}
\newcommand\refk{{{\sf ref}_{Q(P)}^k}}
\newcommand\tk{{\mbox{TAUT}_k}}
\newcommand\lbf{{\mbox{LB}_P(c)}}
\newcommand\nn{{\{0,1\}^n}}
\newcommand\mm{{\{0,1\}^m}}
\newcommand\kk{{\{0,1\}^k}}
\newcommand\el{{\{0,1\}^{\ell}}}
\newcommand\uu{{\{0,1\}^*}}
\newcommand\prob{{\mbox Prob}}
\title[Finding hard tautologies]%
{On the computational complexity of finding hard tautologies}
\author{Jan Kraj\'{\i}\v{c}ek}
\date{Faculty of Mathematics and Physics\\
Charles University in Prague}
\begin{document}
\maketitle

\begin{abstract}
It is well-known (cf. K.-Pudl\'ak\cite{KP-jsl})
that a polynomial time algorithm finding tautologies hard
for a propositional proof system $P$ exists iff $P$ is not optimal.
Such an algorithm takes $1^{(k)}$ and outputs a tautology $\tau_k$
of size at least $k$ such that $P$ is not p-bounded on the set of all
$\tau_k$'s.

We consider two more general search problems involving finding
a hard formula, {\bf Cert} and {\bf Find}, motivated by two hypothetical situations:
that one can prove that $\np \neq co\np$ and that no optimal proof system exists.
In {\bf Cert} one is asked to find a witness that a given non-deterministic
circuit with $k$ inputs does not define $TAUT \cap \kk$. In {\bf Find},
given $1^{(k)}$ and a tautology $\alpha$ of size at most $k^{c_0}$, one should
output a size $k$ tautology $\beta$ that has no size $k^{c_1}$ $P$-proof
from substitution instances of $\alpha$.

We shall prove, assuming the existence of an exponentially hard one-way permutation,
that {\bf Cert} cannot be solved by a time $2^{O(k)}$ algorithm. Using a
stronger hypothesis about the proof complexity of Nisan-Wigderson generator
we show that both problems {\bf Cert} and {\bf Find} are actually only partially defined
for infinitely many $k$ (i.e. there are inputs corresponding to $k$ for which
the problem has no solution).
The results are based on interpreting the Nisan-Wigderson generator
as a proof system.

\end{abstract}

A {\bf propositional proof system} in the sense of Cook and Reckhow \cite{CR}
is a polynomial time relation $P(x,y)$ such that for a binary string $\tau$:
\[
\tau \in \mbox{TAUT}\ \ \mbox{iff }\ \ \exists \pi \in \uu P(\tau, \pi)
\]
where TAUT is the set of propositional tautologies (in DeMorgan
language for the definiteness). 
Any string $\pi$ for which $P(\tau, \pi)$ holds is called 
a {\bf $P$-proof} of $\tau$.
A proof system (tacitly propositional from now on) is {\bf p-bounded} iff
there exists a constant $c \geq 1$ such that the above holds even
with the requirement that $|\pi| \le |\tau|^c$. 
Cook and Reckhow \cite{CR} noted that a p-bounded
proof system exists iff $\np = \mbox{co}\np$. Hence proving that
no p-bounded proof system exists would imply 
$\np \neq \mbox{co}\np$ and thus also $\pp \neq \np$. This fact elevated 
the investigation of lengths of proofs into a fundamental topic of mathematical
logic approach to computational complexity.

Strong lower bounds were proved for a variety of proof systems and several different
methods for this purpose were invented. Examples of proof systems that appear 
to be outside of the scope of current methods are the so called {\bf Frege systems}: the
usual text-book propositional calculi based on a finite number of axioms schemes and
inference rules (only quadratic lower bounds are known for them, cf.\cite{Kra-speed}).
This apparent failure could cause an uninformed reader to dismiss the whole area of proof complexity.
However, although we may not be near proving that $\np \neq \mbox{co}\np$,
the lower bounds for weaker proof systems proved so far do have consequences
interesting in their own right. For example, a single lower bound for a proof system
$P$ implies time lower bounds for a class $Alg(P)$ of SAT algorithms associated 
with $P$ and all commonly used SAT algorithms belong to such a  class for some $P$ for which we have
an exponential lower bound (cf.\cite{Kra-sat} and references given there).
Another type of consequences can be found 
in bounded arithmetic: a lengths-of-proofs lower bound
for $P$ often implies the unprovability of a true $\Pi^0_1$ sentence in a first-order theory
$T_P$ associated with $P$. These unprovability arguments do not use G\"{o}del's theorem
and the $\Pi^0_1$ sentences involved have typically a clear combinatorial
meaning. 
And last but not least, any 
super-polynomial lower bound for $P$ also implies that $\pp \neq \np$ is 
consistent with $T_P$.
We shall not survey these proof complexity topics in detail here and instead refer the reader
to expositions in \cite{kniha,Kra-methods,Kra-ecm,Pud-survey}
or in \cite[Chpt.27]{k2}.

\bigskip

In this paper we are interested in the question how hard it is - to be measured 
in terms of computational complexity here - 
to find tautologies hard (i.e. requiring long proofs) for a given proof system.
Proposing plausible candidates for tautologies hard for Frege systems and for
stronger proof systems turned out to be a quite delicate issue. 
The lack of a variety of good candidates is 
one of principal obstacles for proving lower bounds for strong systems. Of course,
in principle one would be happy to accept a suggestion for such a hard tautology
from a friendly oracle. However, the
experience with known lower bound proofs shows that it is essential to have explicit formulas
with a transparent combinatorial meaning. 

In particular, all first super-polynomial lower bounds for
proof systems for which we have any such bounds 
were proved for some sequence of
tautologies $\{\tau_k\}_k$ of size $|\tau_k| \geq k$ and constructible 
in polynomial time (or even log space) from $1^{(k)}$. This type of sequences of hard tautologies
has been considered in \cite{KP-jsl} and \cite[Chpt.14]{kniha} and it exists for a proof system
$P$ iff $P$ is not {\bf optimal}, i.e. there exists a proof system $Q$ that has a super-polynomial speed-up over
$P$ (w.r.t. lengths of proofs) on an infinite set of formulas.
It is consistent with the present knowledge, and indeed most researchers seem to conjecture that, 
that no optimal proof system exist and hence that for
each $P$ a p-time constructible sequence of hard formulas exist. However, deriving the existence of
hard formulas from the assumption of non-optimality 
is not very illuminating: it is a basic proof complexity result that $P$ cannot admit polynomial size
proofs of formulas expressing the soundness of a proof system $Q$ (these formulas are log space
constructible) if $Q$ has a super-polynomial speed-up
over $P$ on an infinite set of formulas (cf.\cite[Chpt.14]{kniha}). 
Hence, in a sense, deriving the existence of a polynomial time 
sequence of hard tautologies from the non-optimality assumption
amounts just to restating the assumption
in a different terminology.
We refer the reader to Beyersdorff-Sadowski\cite{BeySad} for further information 
and up-to-date references.

\bigskip

We shall consider in this paper two more general
search problems in which the task includes a 
requirement to find a hard tautology. 
The two problems model in their ways two hypothetical
situations: a situation when one can prove
$\np \neq co\np$ (i.e. super-polynomial lower bounds for all proof systems)
and a situation when one can prove that no optimal proof system exists
by having a uniform method how to construct from a given proof system
a stronger one. These two tasks, {\bf Cert} and {\bf Find}, will be defined
in Section \ref{tasks} (in Section \ref{pairs} we add one more search task
{\bf Pair} involving disjoint pairs of $\np$ sets).

We will prove (using the hypothesis of the existence of a hard
one-way permutation) that {\bf Cert} cannot be solved by exponential time
algorithms and (using a stronger 
hypothesis about the proof complexity of the
Nisan-Wigderson generator) that both {\bf Cert} and
{\bf Find} actually cannot be solved at all on infinitely many 
input lengths.
Our primary motivation for this research is to understand
what kind of consequences do various - both proven and conjectural - 
statements about the proof complexity of the Nisan-Wigderson generator have.
\bigskip

The paper is organized as follows. 
After the motivation and the definition of the search tasks {\bf Cert}
and {\bf Find} in Section \ref{tasks} we review some complexity theory
in Section \ref{1} and some proof complexity in 
Section \ref{2}.
The hardness results are proved in Sections \ref{4}
and \ref{6}, respectively (after a proof complexity interlude
in Section \ref{5}). The paper is concluded by Section \ref{pairs}
considering a related search task for disjoint pairs of sets and
a few remarks in Section \ref{rema}.

\medskip

We do assume only basic complexity theory and proof
complexity (e.g. the well-known relation between reflection principles 
and simulations).
But the reader may still benefit from understanding a
wider proof complexity context.
In particular, \cite[Chpt.27]{k2} overviews some fundamental problems
of proof complexity and
\cite[Chpts.29 and 30]{k2} survey\footnote{One can read these
chapters independently of the rest of the book.}  
the theory proof complexity generators (and list relevant literature).

\section{The search tasks {\bf Cert} and {\bf Find}}
\label{tasks}

We are going to consider two search tasks asking us to find formulas
with certain properties (and in Section \ref{pairs} we add one more).
Both are more complex than the mere task to construct hard tautologies for
a given proof system that was discussed in the introduction.
To motivate them we shall describe two thought situations in proof complexity; 
the search tasks are then abstract (and simplified) versions of those.

\medskip

First assume that you can prove (i.e. ZFC can) that $\np\ne co\np$ and
thus, in particular, super-polynomial lower bounds for all proof systems.
For a proof system $P$ and a constant $c \geq 1$
denote by $\lbf$ the statement
\[
\forall 1^{(k)} \exists \tau \ 
[|\tau|\geq k \wedge \tau \in TAUT \wedge 
\forall \pi (|\pi|\le |\tau|^c) \neg P(\tau, \pi)]
\]
formalizing a polynomial lower bound for $P$ with degree $c$.

It is easy to see that for any decent proof system (see
Section \ref{5} for a formal definition of decency), as long as we can
prove some specific polynomial lower bound we can also prove its
soundness. The decency assumption
allows to extend a proof of a falsifiable formula
$\varphi$ to a proof of $0$ and further to a proof of any $\tau$, all
in polynomial time. 

But by a simple application of G\"{o}del's theorem 
ZFC is not able to prove the soundness of all proof systems.
This suggests that we should be proving lower bounds conditioned
upon the assumption that $P$ is indeed a Cook-Reckhow proof system.
If $P$ were not complete we do not need to bother with lower bounds for it,
so the interesting clause of the Cook-Reckhow definition 
is the soundness and we are lead to implications:
\[
Ref_P \rightarrow \lbf 
\]
where $Ref_P$ is a universal sentence (in the language $L_{PV}$ of
Section \ref{2}) formalizing that any formula with a $P$-proof must
be a tautology. 

Now we simplify the situation a bit more.
Let $D(x,y)$ be a circuit in $k$ variables $x = (x_1,\dots, x_k)$
and $\ell = k^c$ variables $y = (y_1, \dots, y_{\ell})$
which we 
interpret as the provability relation of a proof system restricted to
formulas of size $k$ and proofs of size at most $\ell$. 
This motivates the following
{\bf search task} $\mbox{{\bf Cert}}(c)$ defined for any constant $c \geq 1$:

\begin{itemize}

\item input: $1^{(k)}$ and
a size $k^{c^2}$ circuit $D(x,y)$ in $k$ variables $x = (x_1,\dots, x_k)$
and $\ell = k^c$ variables $y = (y_1, \dots, y_{\ell})$

\item required output: either a size $k$ falsifiable formula $\varphi$ such that
$D(\varphi, y)$ is satisfiable or a size $k$ tautology $\tau$ such that
$D(\tau,y)$ is unsatisfiable.

\end{itemize}
The output
of $Cert(c)$ thus certifies that $D$ is not a non-deterministic circuit
(with input $x$ and non-deterministic variables $y$) that accepts $TAUT \cap \kk$.
In other words, the output either witnesses
that $D$ does not correspond to a sound proof
system or that the proof system admits a lower bound.

The provability relation of a proof system restricted to size $k$ formulas and size $\ell$
proofs can be computed by circuits of size $(k+\ell)^{O(1)}$. In the formulation
of the problem we have represented the $O(1)$ constant by $c$ as well.
In addition {\bf Cert} ignores the uniformity of such circuits corresponding
to a particular proof system (they can be constructed in log space from $1^{(k)}, 1^{(\ell)}$).
This is in line with the 
prevailing approach in complexity theory to reduce uniform 
problems to non-uniform finite combinatorial problems. Finally note that
in our simplification we are taking the reflection
principle just for the proof lengths corresponding to the lower bound we should witness;
this make sense due to the non-uniformity.

\bigskip

The second search task we shall consider is motivated by another thought
experiment. Assume that you can prove that no optimal proof system exists and, in fact,
that you have a uniform construction that from a proof system
$P$ produces a stronger proof system $Q(P)$ (i.e. not simulable by $P$).
For definiteness, assume that there is one oracle polynomial time machine
that for all $P$ defines $Q(P)$ when having the oracle for $P$.
Then we expect to be able to prove
\[
Ref_P\ \rightarrow\ Ref_{Q(P)}
\]
and, most importantly, that it is stronger
\[
Ref_P\ \rightarrow\ 
\forall 1^{(k)} \forall \pi (|\pi|\le k^c) \neg P(\refk, \pi)]
\]
where $\refk$ is a size $k^{O(1)}$ tautology formalizing
the soundness of $Q(P)$ w.r.t. all proof of size at most $k$
(we assume for simplicity that a proof is always at least as long as the
formula it proves so one parameter suffices).
See a similar formula in (\ref{m0}) in the proof of Lemma \ref{5.3}.

Any decent proof system can simulate $Q(P)$ if it can use $\refk$
as extra axioms (see Section \ref{5}). In the following problem
$\alpha$ represents a bit more\footnote{Really just a bit: for decent $P$ adding $\alpha$
is equivalent to adding the reflection principles for $P+\alpha$.}
generally any extra axiom.

Let $P$ be a proof system and $c_1 \geq c_0 \geq 1$ be 
constants. Consider the following 
promise {\bf computational task} $\mbox{{\bf Find}}(P,c_1,c_0)$:

\begin{itemize}

\item input: $1^{(k)}$ and a tautology $\alpha$ such that $|\alpha| \le k^{c_0}$

\item required output: any size $k$ tautology $\beta$ 
that has no proof in proof system $P + \alpha$, $P$ augmented by $\alpha$ as an extra
axiom scheme\footnote{The proof system $P + \alpha$ will be defined in Section \ref{5}.}, of size less 
than $k^{c_1}$.

\end{itemize}
The requirement that the size of $\beta$ is exactly $k$ is just for a technical
convenience; we could allow any interval $[k^{\Omega(1)}, k^{O(1)}]$ instead.

\section{Computational complexity preliminaries}
\label{1}

Let $n \rightarrow m = m(n)$ be an injective function such that $m(n) > n$
and let $f : \uu \rightarrow \uu$ be a Boolean function. The {\bf Nisan-Wigderson
generator} $NW_{A,f} : \nn \rightarrow \mm$ is defined using the notion of a 
design. A $(d,\ell)$-design on $[n]$ is a set system  
$A = \{J_i\subseteq [n]\}_{i\in [m]}$ on $[n] = \{1, \dots, n\}$
such that:
\begin{itemize}

\item $|J_i| = \ell$, for all $i$,

\item $|J_i \cap J_j| \le d$, for all $i \neq j$.

\end{itemize}
The $i$-th bit of $NW_{A,f}(x)$ is computed by $f_{\ell} : \el \rightarrow \{0,1\}$
from the $\ell$-bit string $x(J_i) := x_{j_1}\dots x_{j_{\ell}}$, where
\[
J_i\ =\ \{j_1< \dots < j_{\ell}\}\ 
\]
and $f_{\ell}$ is the restriction of $f$ to $\el$. In the future the parameter $\ell$
will be determined by $n$ and we shall denote the restriction $f$ as well.
Nisan and Wigderson\cite{NW} showed that there are such designs
for a wide range of parameters $n,m,d,\ell$ and that one can construct
them uniformly and feasibly. In particular,  
we can fix the parameters as follows:
\begin{equation}\label{e1}
\ell := n^{1/3}\ \ \mbox{ and }\ \ m := 2^{n^{\delta}}\ \ 
\mbox{ and }\ \ \ d := \log(m)\ ,
\end{equation}
where $1/3 \geq \delta > 0$ is arbitrary. We shall thus assume that fixing $n$ 
and $\delta$ fixes the other parameters and also some set system $A_n$ 
constructed from $1^{(n)}, 1^{(m)}$ in time $m^{O(1)}$ and
with parameters meeting the requirements. In fact, we need that
\begin{equation} \label{e2}
\mbox{{\em $J_i$ is computable from $i$  and $1^{(n)}$ in polynomial time.}}
\end{equation}
The design from \cite[L.2.5]{NW} has this property.

\bigskip

In our construction the function $f$ will be $\npco$.
By this we mean that
it is the characteristic function of a language in $\npco$.
Hence $f$ is defined by two $\np$ predicates 
\begin{equation}\label{e3}
\exists y (|y| \le |u|^{c} \wedge F_0(u,y))\ \ \mbox{ and }
\ \  
\exists y (|y| \le |u|^{c} \wedge F_1(u,y))
\end{equation}
with $F_0$ and $F_1$ polynomial-time relations and $c$ 
a constant such that
\begin{equation}\label{i0}
f(u) = a\ \mbox{ iff }\ \exists y (|y| \le |u|^{c} \wedge F_a(u,y))
\end{equation}
for $a = 0, 1$. Any string $y$ witnessing 
the existential quantifier will be called a witness for
$f(u)$. 

We shall use results from \cite{Kra-nwg} and those do assume that
$f$ has unique witnesses, meaning that
for each $u$ there is exactly one witness for $f(u)$.
A natural source of $\npco$ functions with unique witnesses are 
hard bits of one-way permutations.
That is, for a polynomial time (and intended to be one-way)
permutation $h : \uu \rightarrow \uu$ we have
\begin{equation}
f(u)\ :=\ B(h^{(-1)}(u))\ 
\end{equation}
where $B(v)$ is a hard bit predicate for $h$.

The hardness of one-way permutations is measured as follows.
A polynomial time permutation $h$ is defined to be 
{\bf $\epsilon(\ell)$ one way with security parameter $t(\ell)$}
iff for all $\ell$ and any circuit $D$ with $\ell$ inputs
and of size at most $t(\ell)$ it holds:
\[
\prob_{v \in \{0,1\}^{\ell}}[ D(h(v)) = v] \ \le \ \epsilon(\ell)\ . 
\]
Using the Goldreich-Levin theorem we may assume that such a permutation
$h$ has a hard bit function $B(v)$. The details can be found in 
Goldreich \cite{Gol}.

\bigskip

Our construction needs to assume that $f$ is hard in the sense of
Nisan and Wigderson \cite{NW}. They
define $f$ to be $(\epsilon(\ell), S(\ell))$-hard if 
for every $\ell$ and every
circuit $C$ with $\ell$ inputs and of size at  most $S(\ell)$ it holds:
\[
\prob_{u \in \{0,1\}^{\ell}}[C(u) = f(u) ]\ < \ 
1/2 + \epsilon/2\ .
\]
They define then the {\bf hardness of $f$}, denoted {\bf $H_f(\ell)$},
to be the maximal $S$ such that the function is $(1/S, S)$-hard.
This simplification makes sense when $\epsilon$ has the rate about 
$m^{-O(1)}$  as in Nisan and Wigderson \cite{NW}. 

In the proof complexity
situations studied in \cite{Kra-nwg} 
the parameter $S$ plays the main role, with $\epsilon$ being primarily 
of the rate $\ell^{- O(1)}$.
This corresponds to the fact that in
applications of the original Nisan-Wigderson generators 
$m$ is usually exponentially large but for various purposes 
of proof complexity (especially lengths-of-proofs lower bounds)
the best choice would be at the opposite end: $m=n+1$. 
This lead in \cite{Kra-nwg} to keeping $\epsilon$ and $S$ separate
and using the notion of the approximating hardness (defined there)
in place of $H_f(\ell)$.
In this paper, however,  we shall use only those results from \cite{Kra-nwg}
where $m$ is exponentially large as in (\ref{e1})
and thus using the measure $H_f(\ell)$ suffices here. 

A one-way permutation $h$ with a hard bit $B$ is {\bf exponentially hard}
iff it is $2^{-{\ell}^{\Omega(1)}}$ one-way with security parameter 
$2^{{\ell}^{\Omega(1)}}$. The hardness $H_f(\ell)$ of $f$ is then 
$2^{{\ell}^{\Omega(1)}}$ as well.
Details of these constructions can be found in
Goldreich \cite{Gol}.

We will use in Sections \ref{4} and \ref{pairs}
the hypothesis that an exponentially hard one-way permutation
exists instead of the presumably weaker
assumption that an $\npco$ function $f$ with unique witnesses and with
exponential hardness $H_f$ exists. The only reason is that the former hypothesis
is more familiar than the latter one.

\section{Proof complexity preliminaries}
\label{2}

Although the formulation of the search tasks {\bf Cert}
and {\bf Find} may 
not suggests so
explicitly this investigation resulted from a research program
in proof complexity
about the so called proof complexity generators
and we shall use some ideas from this theory. 

We shall start with a 
proof complexity conjecture of Razborov\cite[Conjecture 2]{Raz03}. 
Take an arbitrary string $b \in \mm$ that is outside of the 
range $Rng(\nw)$ of $\nw$. 
The statement $b \notin Rng(\nw)$
is a $\mbox{co}\np$ property of $b$ and can be expressed by a propositional
formula $\tau(\nw)_b$ in the sense that
\[
\tau(\nw)_b \in \mbox{TAUT }\ \mbox{ iff }\ 
b \notin Rng(\nw)\ .
\]
The construction of the propositional translation
of the $\mbox{co}\np$ statement is analogous to the usual proof of
the NP-completeness of SAT. The details can be found in
any of \cite{Coo75,kniha,Pud-survey,k2}). 
Note that the size of the formulas is polynomial in $m$.
{\bf Razborov's conjecture} says that these tautologies are hard for Extended Frege system
EF for $\nw$ defined as above, with $m = 2^{n^{\Omega(1)}}$ and based on an
$\npco$ function $f$ that is hard on average for $\pp/\mbox{poly}$. 
Pich \cite{Pich} proved the conjecture for all proof
systems admitting feasible interpolation in place of EF.

In \cite{Kra-nwg} we have considered a generalization of the conjecture.
We shall recall only one part of that generalization
dealing with exponentially large $m$; in the other parts
$m = n+1$ and they use the
notion of approximating hardness of a function mentioned in the previous
section.

\begin{conjecture}[Part 3 of Statement (S) of \cite{Kra-nwg}]\label{2.1}
\hfill

Assume $f$ is an $\np \cap co\np$ function with unique witnesses
that has an exponential Nisan-Wigderson hardness 
$H_f(\ell) = 2^{\ell^{\Omega(1)}}$.

Then there is $\delta > 0$ such that for
$m(n) = 2^{n^{\delta}}$ and
for any infinite $\np$ set $R$
that has infinitely many elements whose length
equals to $m(n)$ for $n\geq 1$
it holds:
\[
Rng(\nw) \cap R \ \neq \ \emptyset \ .
\]
\end{conjecture}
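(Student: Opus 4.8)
The plan is to prove the contrapositive by a Nisan--Wigderson reconstruction. Suppose some infinite $\np$ set $R$, with polynomial-time witness relation $V$, is disjoint from $Rng(\nw)$ and contains an element of length $m(n)$ for infinitely many $n$; I want to conclude that $f$ is not exponentially hard. Fix such an $n$ and an element $b=b_n\in R$ with $|b|=m=m(n)$, together with a witness $w$ with $|w|\le m^{O(1)}$, so that the pair $(b,w)$ is an object of size $2^{O(n^\delta)}$. Disjointness of $R$ from the range says exactly that for every seed $x\in\nn$ there is a coordinate $i\in[m]$ with $f(x(J_i))\neq b_i$. The aim is to turn this, using $(b,w)$ as non-uniform advice, into a circuit of size $2^{O(n^\delta)}$ that computes $f$ on $\el$ with advantage at least $m^{-O(1)}$. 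Since the conclusion only claims that \emph{some} $\delta>0$ works, we may choose $\delta$ small enough (relative to the hardness exponent of $f$) that $2^{O(n^\delta)}$ stays well below $H_f(n^{1/3})=2^{n^{\Omega(1)}}$; such a circuit then contradicts the exponential Nisan--Wigderson hardness of $f$.

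For the reconstruction one runs the usual hybrid over the $m$ output coordinates, using the design property $|J_i\cap J_j|\le d=\log m$: after freezing the bits of $x$ outside $J_i$, each output bit $f(x(J_j))$ with $j\neq i$ is a function of at most $d$ of the $\ell$ free bits of $x(J_i)$, hence has a circuit of size $O(2^{d})=O(m)$, and all $m$ of them can be hardwired. Two points genuinely differ from the textbook setting. First, evaluating the $\npco$ function $f$ inside a circuit uses the unique-witness hypothesis: to certify a candidate $f(u)=a$ one guesses a string $y$ and checks $F_a(u,y)$, and uniqueness makes this a well-defined Boolean function that, with suitable witness advice, compiles into a small circuit --- this is exactly where the results imported from \cite{Kra-nwg} need unique witnesses. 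Second, and more seriously, the ``distinguisher'' at our disposal is not statistical: it is the single point $b$, which $\nw$ misses but which a uniform $m$-bit string also hits only with probability $2^{-m}$, so a direct hybrid over the single-point test ``does $z$ equal $b$'' produces only a negligible advantage. The real content is the stronger assertion that for \emph{every} seed \emph{some} coordinate is mispredicted, and extracting a noticeable predictor from this ``existential-coordinate'' guarantee is where the choice of $b$ and the uniformity of the witness relation $V$ must be used.

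That amplification is the step I expect to be the main obstacle, and it is not routine: the conclusion of the statement is in fact very strong. Taking for $R$ the set of all $b$ possessing a short proof of $\tau(\nw)_b$ in a hypothetical p-bounded proof system --- such a proof is a polynomial-length $\np$ witness, and $\tau(\nw)_b$ is a tautology for almost every $b$ of length $m$ --- shows that the statement would fail if $\np=co\np$; hence, under the standing hypothesis on $f$, a proof of it would in particular separate $\np$ from $co\np$, so no generic reconstruction can close the gap. In practice I would therefore aim only at conditional forms. Pich's extension \cite{Pich} of Razborov's conjecture to proof systems with feasible interpolation already handles the sub-case in which the witness relation of $R$ is the provability relation of such a system, the interpolation theorem supplying the missing amplification; beyond that one can try to push the Nisan--Wigderson-style argument of \cite{Kra-nwg} further. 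But for an arbitrary $\np$ set $R$, in the exponentially-large-$m$ regime considered here, I expect the amplification to remain essentially as hard as the $\np$ versus $co\np$ problem --- which is why \cite{Kra-nwg} records the statement as a conjecture.
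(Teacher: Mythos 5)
You have correctly recognized that this statement is not a theorem of the paper but a conjecture (Part 3 of Statement (S) of \cite{Kra-nwg}), and the paper offers no proof of it, so there is no internal proof to compare your attempt against --- only the surrounding discussion. Your diagnosis of why the textbook Nisan--Wigderson reconstruction fails is accurate and consistent with that discussion: the ``distinguisher'' supplied by an element $b \in R \setminus Rng(\nw)$ is a single point (or an $\np$-witnessed event) rather than a statistical test, and the amplification from ``every seed is refuted at some coordinate'' to a predictor for $f$ with noticeable advantage is exactly the missing step that keeps this a conjecture. Your observation that a proof would separate $\np$ from $\mbox{co}\np$ is precisely the paper's Lemma \ref{2.2} instantiated with a hypothetical p-bounded proof system, and your pointer to the feasible-interpolation case \cite{Pich} is the same partial evidence the paper cites. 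The only supporting result the paper actually establishes is the consistency statement, Theorem \ref{3.1}: the true universal theory $\tpv$ cannot prove, for any $\np$ set $R$ of the relevant kind, that $R$ avoids $Rng(\nw)$. Your proposal does not engage with that model-theoretic route, but it was not obliged to. In short, you have not proved the statement, but neither does the paper; your account of the obstruction and of what the statement would imply is correct, and your decision to stop rather than claim a proof is the right one.
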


Let us observe that Conjecture \ref{2.1} has a proof complexity corollary
including Razborov's conjecture.

\begin{lemma}\label{2.2}
\hfill

Let $P$ be any proof system.
Assume that Conjecture \ref{2.1} holds and that 
the Nisan-Wigderson hardness $H_f(\ell)$ of an
$\npco$ function $f$ with unique witnesses 
is $2^{\ell^{\Omega(1)}}$.

Then there exists $\delta > 0$ such that 
for all $c \geq 1$, the size of $P$-proofs of formulas $\tau(\nw)_b$ for 
all large enough $n$ and all $b \notin Rng(\nw)$ of size 
$|b| = m(n)$
is bigger than $|\tau(\nw)_b|^c$.

\end{lemma}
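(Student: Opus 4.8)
The plan is to obtain Lemma \ref{2.2} from Conjecture \ref{2.1} by a contrapositive argument, taking for the set $R$ in the conjecture the set of all strings $b$ for which the translation $\tau(\nw)_b$ has a short $P$-proof. First I would fix, once and for all, the $\delta>0$ that Conjecture \ref{2.1} supplies for the given $\npco$ function $f$ with unique witnesses and exponential hardness $H_f(\ell)=2^{\ell^{\Omega(1)}}$; this is the $\delta$ claimed in the lemma, and from now on $m(n)=2^{n^\delta}$, the designs $A_n$, and $\nw=NW_{A_n,f}$ are the objects fixed in Section \ref{1}.

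Next I would argue by contradiction. If the conclusion of the lemma fails for this $\delta$, then (unwinding the quantifiers) there are a proof system $P$ and a constant $c\geq 1$ such that for infinitely many $n$ some $b\notin Rng(\nw)$ with $|b|=m(n)$ admits a $P$-proof of $\tau(\nw)_b$ of size at most $|\tau(\nw)_b|^c$. Put
\[
R\ :=\ \{\, b\ :\ |b|=m(n)\ \mbox{for some}\ n\geq 1,\ \mbox{and}\ \exists \pi\,(\,|\pi|\le|\tau(\nw)_b|^c \wedge P(\tau(\nw)_b,\pi)\,)\,\}\ .
\]
Then I would verify $R\in\np$: from $b$ one recovers in polynomial time the unique $n$ with $|b|=m(n)$ (if any), since $m$ is injective and strictly increasing; one constructs $\tau(\nw)_b$ in time polynomial in $|b|$ using the feasible constructibility of the Nisan-Wigderson designs, i.e. property (\ref{e2}) and the remark following it; and one guesses $\pi$ and verifies $P(\tau(\nw)_b,\pi)$ in polynomial time since $P$ is a polynomial-time relation. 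Two further points are needed: (i) $R\cap Rng(\nw)=\emptyset$, because any $b\in R$ furnishes a $P$-proof of $\tau(\nw)_b$, whence $\tau(\nw)_b\in\mbox{TAUT}$ by the soundness clause of the Cook-Reckhow definition, whence $b\notin Rng(\nw)$ by the defining property of the translation; (ii) by the contradiction hypothesis $R$ contains, for infinitely many $n$, an element of length $m(n)$, so $R$ is infinite and meets the length requirement in Conjecture \ref{2.1}.

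Applying Conjecture \ref{2.1} to this $R$ gives $Rng(\nw)\cap R\neq\emptyset$, contradicting (i), and the lemma follows. I do not expect a deep obstacle: all the mathematical content sits in Conjecture \ref{2.1}, and the only places that need care are the bookkeeping that $R$ is genuinely an $\np$ set (which is exactly where feasibility of the designs enters) and the correct negation of the nested quantifiers in the conclusion. Finally, as the discussion preceding the lemma promises, taking $P$ to be Extended Frege recovers Razborov's conjecture, since exponential Nisan-Wigderson hardness of $f$ in particular makes $f$ hard on average for $\pp/\mbox{poly}$.
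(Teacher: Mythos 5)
Your proposal is correct and is exactly the argument the paper intends: the paper's one-line proof consists precisely of the observation that the set $R$ of strings $b$ of length $m(n)$ whose translation $\tau(\nw)_b$ has a $P$-proof of size at most $|\tau(\nw)_b|^c$ is in $\np$, and you have simply filled in the routine details (the $\np$ verification, the disjointness of $R$ from $Rng(\nw)$ via soundness, and the quantifier bookkeeping) of the same contradiction with Conjecture \ref{2.1}.
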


\begin{proof}

Note that the set $R$ of all $b$ 
of lengths $m(n)$ for $n \geq 1$
for which $\tau(\nw)_b$ has a $P$-proof
of size at most  $|\tau(\nw)_b|^c$ is in $\np$.

\end{proof}

Now we recall (a part of) the consistency result 
from \cite{Kra-nwg} concerning Conjecture \ref{2.1}. 
Its technical heart is a lower bound on complexity of
functions solving a certain search task associated with
$\nw$ and that would, in principle, suffice for our purposes here.
Using the consistency result itself, however, seems to decrease the
number of technicalities one otherwise needs to discuss. 

We first recapitulate a few basic definitions.
Cook \cite{Coo75} has defined a theory PV whose 
language $L_{PV}$ has a name for every 
polynomial-time algorithm
obtained from a few basic algorithms by the composition and by the limited
recursion on notation, following Cobham's \cite{Cob64} characterization
of polynomial time. The details of the definition of $L_{PV}$ can be found in
\cite{Coo75,kniha} but are not important here. 
In fact, neither is the theory PV itself
as we shall work with  
the true universal first-order theory of ${\mathbf N}$ in the 
language $L_{PV}$. We shall denote this theory 
$\tpv$, as in \cite{Kra-nwg}.
Note that $\tpv$ contains formulas expressing
the soundness of all proof systems.

Let $f$ be an $\np \cap \mbox{co}\np$ function defined as in (\ref{i0}). 
Let us abbreviate by $G(w,z,x,y)$ the open $L_{PV}$
formula
\begin{equation}
(z_x=0 \wedge F_0(w(J_x), y)) \vee (z_x=1 \wedge F_1(w(J_x), y))
\end{equation}
where $J_x$ is from the set system $A_n$ (polynomial time definable from $1^{(n)}$ and $x$)
and $F_0$, $F_1$ are 
from (\ref{e3}).

We do not have a symbol in $L_{PV}$
for the function on $\uu$ computed 
for $n \geq 1$ on $\nn$ by
$\nw$ as it is not a polynomial time function,
and the function 
has to be defined. One possible formalization of the 
statement $\nw(w)= z$ for $|w|=n$ and $|z|=m$ is then
\begin{equation}
\forall x \in [m] \exists y (|y| \le \ell^c)\ G(w,z,x, y)\ 
\end{equation}
with $c$ from (\ref{e3}). 
Now we are ready to state the result from \cite{Kra-nwg}
we shall need.

\begin{theorem}[Kraj\'{\i}\v{c}ek{\cite[Thm.4.2(part 3)]{Kra-nwg}}]
\label{3.1}
\hfill

Assume $f$ is an 
$\np \cap co\np$ function with unique witnesses
having the Nisan-Wigderson hardness $H_f(\ell)$
at least $2^{\ell^{\Omega(1)}}$.

Then there is $\delta > 0$ such that for any 
$\np$ set $R$ 
that has infinitely many elements whose length
equals to $m(n)$ for $n\geq 1$ and
defined by $L_{PV}$ formula
$\exists v (|v| \le |z|^d) R_0(z,v)$, with $R_0$ open,
theory $\tpv$ does not prove the universal closure of the formula
\[
A\ \rightarrow\ B
\]
where $A$ is the formula with variables $v, w, z, n, m, \ell$
\[
n = |w| \wedge m = |z| \wedge m = 2^{n^{\delta}} \wedge \ell = n^{1/3}
\wedge |v| \le m^d \wedge R_0(z,v)
\]
and $B$ is the formula
\[
\exists x \in [m] \forall y (|y| \le \ell^c)
\ \neg G(w, z, x, y)\ .
\]
\end{theorem}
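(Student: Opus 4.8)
\medskip\noindent\textbf{Proof idea.}
The plan is to argue by contradiction: suppose $\tpv$ proves the universal closure of $A\to B$. First note that $\neg B$ is just the statement ``$z=\nw(w)$'': by (\ref{i0}), $\exists y(|y|\le\ell^c)\,G(w,z,x,y)$ holds exactly when $z_x=f(w(J_x))$, so $\neg B$ asserts $z_x=f(w(J_x))$ for all $x\in[m]$. Since $\tpv$ is a true \emph{universal} theory and $A$ is open, after Skolemizing the $\exists y$ hidden in $\neg B$ one can apply Herbrand's theorem and extract a fixed number $p$ of $L_{PV}$-terms $t_1,\ t_2(y_1),\ \dots,\ t_p(y_1,\dots,y_{p-1})$ for which $\tpv$ proves that, whenever $A$ holds and $|y_i|\le\ell^c$ for all $i$, some disjunct $t_i\in[m]\wedge\neg G(w,z,t_i,y_i)$ holds. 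Read computationally, these terms form a polynomial-time ``adversary strategy'': on a seed $w$ and a string $z\in R$ given with an $R_0$-witness $v$, it inspects $O(1)$ coordinates of $z$ and of $\nw(w)$ adaptively -- each time being handed a claimed $f$-witness $y_i$ for the bit it asks about -- and always either exposes a false witness or names a coordinate on which $\nw(w)$ and $z$ genuinely differ, i.e.\ certifies $z\notin Rng(\nw)$. Because $\tpv$ is true, such a strategy in fact exists and is total and sound.

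The second step is to contradict the Nisan--Wigderson hardness of $f$. Fix $\epsilon_0>0$ with $H_f(\ell)\ge 2^{\ell^{\epsilon_0}}$ for large $\ell$ and choose $\delta>0$ sufficiently small (say $\delta<\epsilon_0/3$) -- this is the $\delta$ of the statement; then with the parameters (\ref{e1}) one has $\ell=n^{1/3}$ and $H_f(\ell)=2^{n^{\epsilon_0/3}}$, which dominates $m\cdot 2^d$ super-polynomially, so the Nisan--Wigderson reconstruction theorem is in force. One then shows that a total sound adversary strategy as above can be compiled into a circuit of size well below $H_f(\ell)$ that computes $f$ on $\ell$-bit inputs with advantage exceeding $1/H_f(\ell)$, contradicting the hardness of $f$; this complexity lower bound for the search task attached to $\nw$ is the technical heart of \cite{Kra-nwg}, and it forces the assumption $\tpv\vdash\forall(A\to B)$ to be false. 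Equivalently, \cite{Kra-nwg} derives the unprovability model-theoretically, by forcing with random variables: one builds a structure $M\models\tpv$ (validity of $\tpv$ being essentially automatic, since $\tpv$ is true and universal and the underlying family of random variables is closed under $L_{PV}$-operations) in which a generic object satisfies $A\wedge\neg B$, so that $\tpv\not\vdash\forall(A\to B)$.

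The hard part, in either presentation, is the step just described, and it is delicate for the following reason. For the ``interesting'' sets $R$ -- sparse ones disjoint from $Rng(\nw)$ -- the statement $\forall(A\to B)$ is in fact \emph{true}, so one cannot refute it by exhibiting a counterexample in ${\mathbf N}$; one must instead exploit that $\tpv$, lacking induction, cannot certify the disjointness of a sparse $R$ from $Rng(\nw)$ even though that disjointness holds. In the witnessing route this manifests as the adversary's queries being allowed to depend on the \emph{whole} seed $w$, so the Nisan--Wigderson reconstruction has to be carried out in an adaptive, non-black-box form; in the model-theoretic route it manifests as the need to make ``$z=\nw(w)$'' hold in $M$ coordinate by coordinate without ever validating the full $2^{n^{\delta}}$-fold conjunction. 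Either way, the budget one must manage carefully is the calibration of $\delta$ against $\epsilon_0$ and the requirement that the relevant circuits or random variables be rich enough to supply the unique $f$-witnesses of length $\le\ell^c$ on $\ell$-bit inputs, yet too weak to break the hardness $H_f(\ell)=2^{\ell^{\Omega(1)}}$ -- this is where the ``unique witnesses'' hypothesis earns its keep.
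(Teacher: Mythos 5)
The first thing to say is that the paper does not prove Theorem \ref{3.1} at all: it is imported verbatim from \cite[Thm.4.2(part 3)]{Kra-nwg}, and the only comment offered is that ``this statement is in \cite{Kra-nwg} derived from a bit finer model-theoretic result.'' So there is no in-paper argument to compare yours against; what can be assessed is whether your reconstruction is a credible account of the cited proof. At the level of strategy it is: you correctly read $\neg B$ as ``$z=\nw(w)$'' (using totality of $f$ and uniqueness of witnesses), you correctly observe that the universal closure of $A\to B$ is a $\forall\exists\forall$ consequence of the true universal theory $\tpv$, so that KPT/Herbrand witnessing yields a constant-round Student--Teacher strategy of polynomial-time (in $m$) terms, and you correctly identify the target contradiction as a Nisan--Wigderson reconstruction breaking $H_f(\ell)=2^{\ell^{\Omega(1)}}$, with the calibration $\delta<\epsilon_0/3$ needed so that $\mathrm{poly}(m)=2^{O(n^{\delta})}$ stays below $2^{n^{\epsilon_0/3}}$. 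Your mention of the alternative model-theoretic presentation also matches the paper's remark, and the paper's observation in Section \ref{4} that the argument extends to Student--Teacher computations corroborates that the witnessing reading is the right one.

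That said, as a proof the proposal has a genuine gap exactly where you flag ``the technical heart'': the compilation of the adaptive, whole-seed-dependent strategy into a small circuit predicting $f$ on $\ell$-bit inputs is asserted, not carried out. This step is not a routine application of the NW reconstruction theorem: the Student's queries $t_i$ depend on all of $w$ (not just on the block $w(J_{t_i})$), the Teacher's answers are the unique $f$-witnesses (so the predictor must be able to supply or be given these witnesses when it hard-wires and averages over the seed), and one must use the design bound $|J_i\cap J_j|\le d=\log m$ to keep the hard-wired restrictions small. Without executing this hybrid/averaging argument and accounting for the witness-providing advice, the contradiction with $H_f(\ell)$ is not established. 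There is also a point you pass over: for the strategy to be ``total and sound'' one needs the truth of the Herbrand disjunction instantiated at genuine inputs with $z\in R\cap Rng(\nw)$, and it is precisely the interplay between such $z$ (guaranteed to exist infinitely often only as a hypothesis on $R$) and the predictor construction that makes the argument work; your sketch does not say how the hypothesis that $R$ has infinitely many elements of length $m(n)$ enters. So: right skeleton, but the load-bearing lemma is cited rather than proved.
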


This statement is in \cite{Kra-nwg} derived from a bit 
finer model-theoretic
result. 

\section{The hardness of task {\bf Cert}}
\label{4}

The argument we shall use to derive the hardness of {\bf Cert}
applies to a more general situation which we describe now.

By an ${\npcpp}$ {\bf algorithm} we shall mean two polynomial time 
predicates $F_0(x,y,z)$ and $F_1(x,y,z)$ and a constant $c \geq 1$
similarly as in (\ref{e3}) but now with an extra argument $z$ for the non-uniform
advice, and a sequence of advice strings $\{w_k\}_k$ such that $|w_k| \le k^c$
(w.l.o.g. we use constant $c$ also in the bound to the length of advice strings). 
We shall assume that
\begin{equation} \label{h1}
\forall x, z (|z|\le |x|^c)\ 
[\exists y (|y| \le |x|^c) F_0(x,y,z)] \oplus [\exists y (|y| \le |x|^c) F_1(x,y,z)]
\end{equation}
is valid where $\oplus$ is the exclusive disjunction. 
Thus an $\npcpp$ algorithm is an $\npco$ set
of pairs $(x, z)$ of appropriate lengths
augmented by a sequence of advice strings substituted for $z$.
In our situation it is more natural to talk about algorithms than sets as
we shall be looking for "errors they make".
We shall denote such an algorithm $({\cal F}, \{w_k\}_k)$ where
${\cal F}$ is the triple $(F_0, F_1, c)$ from (\ref{h1}).

For $L$ a language let us denote
by $L_k$ the truth table of the characteristic function of $L$ on $\kk$.
If $L \in \neco$ then the set $R^L$ of such strings $\{L_k\ |\ k \geq 1\}$
is in $\np$ and can be defined by an $L_{PV}$ formula as
\begin{equation}
z \in R^L\ \mbox{ iff }\ 
\exists v (|v| \le |z|^d) R^L_0(z,v)
\end{equation}
with $R^L_0$ an open formula. Any $v$ witnessing the existential quantifier for 
$z$ will be called a witness for $z \in R^L$.
Note that $TAUT \in \neco$.

For a language $L \in \neco$ and a triple $\cal F$ as in (\ref{h1}) define the
{\bf search task $\mbox{{\bf Err}}(L, {\cal F})$} as follows:

\begin{itemize}

\item input: $1^{(k)}$, string $L_k$ and a witness $v$ for $L_k \in R^L$,
and a string $w$ such that $|w| \le k^c$

\item required output: a string $x \in \kk$ such that $\cal F$
using $w$ as an advice string makes an error on $x$:
\begin{equation}
\forall y (|y|\le |x|^c) \  
[(x \in L_k \wedge \neg F_1(x,y,w)) \vee
(x \notin L_k \wedge \neg F_0(x,y,w))]
\end{equation}
\end{itemize}

\begin{theorem} \label{4.1}
Assume that an exponentially hard one-way permutation exists.
Let $L$ be a language such that 
$L \in \neco$.

Then there exists a triple ${\cal F}$ 
as in (\ref{h1}) such that no deterministic 
polynomial time algorithm can solve $Err(L,{\cal F})$
on all inputs for all sufficiently large lengths $k$.
\end{theorem}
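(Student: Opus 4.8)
The plan is to realise the triple ${\cal F}$ as the Nisan--Wigderson generator built on the hard bit of the given permutation, and then to show that a hypothetical polynomial‑time solver of $\mbox{{\bf Err}}(L,{\cal F})$ would supply an $L_{PV}$‑definable witnessing function for the implication ``$A\rightarrow B$'' of Theorem \ref{3.1}, which that theorem forbids.

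Concretely, I would first use the Goldreich--Levin construction (\cite{Gol}) to pass from the exponentially hard one‑way permutation to an $\npco$ function $f$ with unique witnesses and $H_f(\ell)=2^{\ell^{\Omega(1)}}$, and fix $\delta>0$ as provided by Theorem \ref{3.1} together with $m(n)=2^{n^{\delta}}$, $\ell=n^{1/3}$ and the designs $A_n$ of Section \ref{1}. For the infinitely many $k$ for which $2^{k}$ has the form $m(n)$ --- which, after a harmless adjustment of the design parameters (cf.\ \cite{NW}), may be taken to be all large $k$ --- fix $n=n(k)$ with $m(n(k))=2^{k}$ and define ${\cal F}=(F_0,F_1,c)$ by: on input $(x,y,z)$ with $|x|=k$, read the first $n(k)$ bits of the advice $z$ as a seed $w\in\nn$, read $x\in\kk$ as an index $i\in[2^{k}]=[m(n(k))]$, compute $J_i$ via (\ref{e2}), and put $F_a(x,y,z):=F^{f}_a(w(J_i),y)$, where $F^{f}_0,F^{f}_1$ are the predicates (\ref{e3}) defining $f$ (for the finitely many remaining $k$, let both $F_a$ reject). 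Taking the constant $c$ large enough that $n(k)\le k^{c}$ and that witnesses for $f$ at $\ell$‑bit inputs have length $\le k^{c}$, property (\ref{h1}) is immediate from $f$ being total with \emph{unique} witnesses. The point of the definition is that, identifying functions $\kk\to\{0,1\}$ with strings of length $2^{k}$, the function computed by ${\cal F}$ from advice $w$ is exactly the string $NW_{A_{n(k)},f}(w)$; and, unwinding the output clause of $\mbox{{\bf Err}}$ using unique witnesses and the equivalences (\ref{i0}), a required output for the input $(1^{(k)},L_k,v,w)$ is precisely a coordinate $x$ on which $NW_{A_{n(k)},f}(w)$ and $L_k$ differ, i.e.\ (in the notation $G$ of Section \ref{2}) a coordinate $x$ with $\forall y\,(|y|\le\ell^{c})\,\neg G(w,L_k,x,y)$. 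In particular such an output exists iff $L_k\ne NW_{A_{n(k)},f}(w)$.

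Now suppose, for contradiction, that a deterministic algorithm $M$ running in time polynomial in its input length (hence in time $2^{O(k)}$, since the input is dominated by $|L_k|$ and by $|v|\le|L_k|^{d}$) solves $\mbox{{\bf Err}}(L,{\cal F})$ on all inputs for all large $k$. If for some large $k$ some valid input $(1^{(k)},L_k,v,w)$ has no solution --- equivalently $L_k\in Rng(NW_{A_{n(k)},f})$ --- then $M$ cannot produce a solution for it, so it does not solve $\mbox{{\bf Err}}(L,{\cal F})$ on all inputs; hence we may assume $L_k\notin Rng(NW_{A_{n(k)},f})$ for all $k\ge k_0$. Apply Theorem \ref{3.1} to the $\np$ set $R:=\{L_k:k\ge k_0\}$, which is definable by a formula $\exists v\,(|v|\le|z|^{d})\,(R^L_0(z,v)\wedge|z|\ge 2^{k_0})$ with open matrix and has infinitely many elements, each of length $m(n)$ for some $n\ge1$ (note $TAUT\in\neco$, so this is not vacuous, and everything applies to every $L\in\neco$). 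Being polynomial time in its input, $M$ is named by an $L_{PV}$‑term, so the expression $M(v,w,z,n,m,\ell)$ --- meaning $M$ applied to $(1^{(\log|z|)},z,v,w)$ with its output read as an element of $[m]$ --- is itself a term. Consider the sentence
\[
\forall v,w,z,n,m,\ell\ \forall y\ [\, |y|\le\ell^{c}\wedge A\ \rightarrow\ \neg G(w,z,M(v,w,z,n,m,\ell),y)\,],
\]
where $A$ is the open formula of Theorem \ref{3.1} for this $R$. It is true in ${\mathbf N}$: whenever $A$ holds, $v$ witnesses $z\in R$, so $z=L_k$ with $k=\log|z|\ge k_0$, hence $L_k\notin Rng(NW_{A_{n(k)},f})$, so the input has a solution and, by correctness of $M$, $M(v,w,z,n,m,\ell)$ is a coordinate on which $NW_{A_{n(k)},f}(w)$ differs from $z$, which is exactly what the conclusion asserts for all short $y$. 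This sentence is universal and true, hence lies in $\tpv$; instantiating the existential quantifier of $B$ by the term $M(v,w,z,n,m,\ell)$ then yields $\tpv\vdash\forall(A\rightarrow B)$, contradicting Theorem \ref{3.1}.

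The step I expect to need the most care is this last reduction: one must notice that a genuine polynomial‑time solver of $\mbox{{\bf Err}}(L,{\cal F})$ does strictly more than witness $B$ pointwise, namely that --- \emph{once the relevant instances are known to be solvable} --- its correctness is a \emph{true universal} $L_{PV}$‑sentence and therefore belongs to $\tpv$, which is precisely what brings Theorem \ref{3.1} to bear. Securing that solvability is exactly the dichotomy ``$L_k\in Rng(NW_{A_{n(k)},f})$ or not'' used above (the first alternative being the situation shown, under the stronger hypothesis, to occur in Section \ref{6}); the remaining work --- matching $2^{k}=m(n(k))$, encoding $L_k$ as the string $z$, and checking (\ref{h1}) --- is routine bookkeeping.
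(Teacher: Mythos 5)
Your proposal is correct and follows essentially the same route as the paper: instantiate ${\cal F}$ as the Nisan--Wigderson generator on the hard bit of the permutation (the paper's formula (\ref{h2})), express the solver's correctness as a true universal $L_{PV}$-sentence belonging to $\tpv$, and substitute the solver's term for the existential variable of $B$ to contradict Theorem \ref{3.1}. Your explicit case split on whether $L_k\in Rng(\nw)$ is handled in the paper implicitly by Claim 1, but the substance is identical.
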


\begin{proof}

Assume that language $L$ satisfies the hypothesis of the theorem
and let $\cal F$ be any triple as in (\ref{h1}).
Assume that $\cal A$ is a deterministic polynomial time algorithm that attempts to
solve $Err(L, {\cal F})$ on all inputs for all $k \geq k_0$, for some $k_0 \geq 1$.

We are going to define a universal $L_{PV}$ sentence 
\[
\ufla
\]
that is true iff 
$\cal A$ solves $Err(L, {\cal F})$ for all inputs for all $k \geq k_0$.

The sentence
$\ufla$ is the universal closure of:
\[
C\ \rightarrow \ D
\]
where $C$ is the formula
\[
|z|=2^k \wedge |v|\le |z|^d \wedge R^L_0(z,v)
 \wedge k \geq k_0 \wedge |w| \le k^c \wedge 
x = {\cal A}(1^{(k)},z,v,w)
\]
with $\cal A$ represented by an $L_{PV}$ function symbol,
and $D$ is the formula
\[
|x| = k \ \wedge \ 
\forall y (|y|\le |x|^c)
[(x \in z \wedge \neg F_1(x,y,w)) \vee
(x \notin z \wedge \neg F_0(x,y,w))]\ .
\]
The following should be obvious:

\medskip
\noindent
{\bf Claim 1:}{\em 
Algorithm $\cal A$ solves $Err(L, {\cal F})$ for all
inputs for all $k \geq k_0$ iff the sentence 
$\ufla$ is true.
}

\medskip
\noindent
We are going now to define a specific $\npcpp$ triple ${\cal F}$
as in (\ref{h1}) 
such that $\ufla$ will be false for all $L \in \neco$,
all polynomial time algorithms $\cal A$ and all $k_0 \geq 1$.

\bigskip

Let $h$ be an exponentially hard one-way permutation with a hard bit $B$. Hence 
the function $f$ from (\ref{i0}) has the exponential hardness
$H_f(\ell) = 2^{- \ell^{\Omega(1)}}$.
The existence of such $h$ and $B$ is 
guaranteed by the hypothesis of the theorem.

Take $\delta > 0$ provided by Theorem \ref{3.1} and put $k:= n^\delta$. Using
$\nw : \nn \rightarrow \mm = \{0,1\}^{2^k}$ define 
an $\npcpp$ triple ${\cal F} = (F_0, F_1, c)$ as n (\ref{h1})
as follows: put 
$c := \delta^{-1}$ and for 
$x \in \kk$, $y \in \nn = \{0,1\}^{k^c}$, $w$ of size $|z| = n$ and $a = 0,1$ define:
\begin{equation}\label{h2}
F_a(x,y,w)\ :=\ 
[h(y) = w(J_x) \wedge B(y) = a]\ .
\end{equation}
In other words, on input $x$ the algorithm computes the $x$-th bit
of $\nw(w)$.

\medskip
\noindent
{\bf Claim 2:} {\em 
For no $L \in \neco$,
no polynomial time algorithm $\cal A$ and no $k_0 \geq 1$ is the
sentence $\ufla$ true.
}

\medskip

To see this note that by substituting term ${\cal A}(1^{(k)},z,v,w)$ for $x$
in $C \rightarrow D$ and quantifying it existentially $\exists x (x \in [m])$
allows to deduce from $\ufla$ the universal closure of $A \rightarrow B$
from Theorem \ref{3.1}. Hence, by that theorem, $\ufla$ cannot be true.

\bigskip

Claims 1 and 2 imply the theorem.

\end{proof}

We remark that the argument can be actually extended to rule out
a larger class of algorithms $\cal A$: the so called Student - Teacher
interactive computations of \cite{KPT,KPS} (see also \cite{kniha}).

\bigskip

Let $({\cal F}, \{w_k\}_k)$ be as above. Define circuits $D_k(x,y)$ to be 
(some canonical) circuits with $k$ inputs $x$ and $k^c$ inputs $y$
that outputs $1$ iff $F_1(x,y,w_k)$ holds. We can choose $c \geq 1$ large enough
so that $D_k$ has size at most $k^{c^2}$.

Given $1^{(k)}$, a time $2^{O(k)}$ algorithm can compute the string $\tk$ (as well
as the witness for $\tk \in R^{TAUT}$ required in the general formulation of the
theorem). Such an algorithm is then polynomial in the size of $\tk$.
If $\tau$ is a solution to $\mbox{{\bf Cert}}(c)$ on input $D_k$ then either 
$\tau \in \tk$ and $\forall y (|y| \le k^c) \neg F_1(\tau, y, w_k)$
or $\tau \notin \tk$ and $D_k(\tau, y)$ is satisfiable, in which case
$\tpv$ implies that
$\forall y (|y| \le k^c) \neg F_0(\tau, y, w_k)$. In other words, 
an algorithm solving $\mbox{{\bf Cert}}(c)$
solves $\mbox{{\bf Err}}(TAUT, {\cal F})$ too. This yields the following statement
as a corollary to Theorem \ref{4.1}.

\begin{corollary}
Assume that an exponentially hard one-way permutation exists.
Then there is $c \geq 1$ such that no deterministic time $2^{O(k)}$
algorithm can solve $\mbox{{\bf Cert}}(c)$ on all lengths $k \geq 1$.
\end{corollary}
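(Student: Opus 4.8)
The statement follows from Theorem \ref{4.1} by a reduction of the search task $\mbox{{\bf Err}}(TAUT, {\cal F})$ to $\mbox{{\bf Cert}}(c)$, so the plan is to supply that reduction and then quote Theorem \ref{4.1}. First I would record the facts about $TAUT$ that are needed: it lies in $\neco$ --- a formula coded by a length-$k$ string has at most $k$ variables, so $TAUT \cap \kk$ is decidable by brute force over assignments in time $2^{O(k)}$ --- hence $R^{TAUT} = \{\tk \mid k \ge 1\}$ is in $\np$, and both $\tk$ and a witness for $\tk \in R^{TAUT}$ are computable from $1^{(k)}$ in time $2^{O(k)}$, which is polynomial in $|\tk| = 2^k$. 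This last point is the crux of the time bookkeeping: once the input to the relevant $\mbox{{\bf Err}}$ instance is a string of length $2^k$, a time $2^{O(k)}$ procedure is a polynomial-time procedure.

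For the reduction, take ${\cal F} = (F_0, F_1, c)$ exactly as defined by (\ref{h2}) in the proof of Theorem \ref{4.1} --- built from the exponentially hard one-way permutation $h$, its hard bit $B$, and the designs $A_n$, with $k = n^{\delta}$ --- choosing the constant $c$ large enough (the argument of Theorem \ref{4.1} goes through unchanged for any sufficiently large $c$). For a string $w$ of the appropriate length let $D_k^w(x,y)$ be a canonical circuit in the $k$ inputs $x$ and $k^c$ inputs $y$ that outputs $1$ exactly when $F_1(x,y,w)$ holds; by (\ref{e2}) and polynomial-time computability of $h$ and $B$ this circuit has size $k^{O(1)}$, and with $c$ chosen large this is at most $k^{c^2}$, so $(1^{(k)}, D_k^w)$ is a legal input of $\mbox{{\bf Cert}}(c)$ and the map $w \mapsto D_k^w$ is polynomial time (the various length parameters being matched up as in the definition of ${\cal F}$ and of $D_k$). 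Now suppose, for contradiction, that some deterministic time $2^{O(k)}$ algorithm ${\cal B}$ solves $\mbox{{\bf Cert}}(c)$ on all lengths. Define ${\cal A}$ on an input $(1^{(k)}, \tk, v, w)$ of $\mbox{{\bf Err}}(TAUT, {\cal F})$ by building $D_k^w$, running ${\cal B}$ on $(1^{(k)}, D_k^w)$ to obtain some string $\tau$, and outputting $\tau$; then ${\cal A}$ runs in time $2^{O(k)} = (2^k)^{O(1)}$, i.e.\ polynomial in its input length.

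It remains to check that $\tau$ really solves $\mbox{{\bf Err}}(TAUT, {\cal F})$. We have $|\tau| = k$, and I split on whether $\tau \in \tk$. If $\tau \in \tk$, then $\tau \in TAUT$, so the $\mbox{{\bf Cert}}(c)$-solution $\tau$ cannot satisfy the first (falsifiable-formula) clause of the output specification and must satisfy the second: $D_k^w(\tau,y)$ is unsatisfiable, that is, $\forall y (|y| \le k^c)\, \neg F_1(\tau, y, w)$, which is exactly the error clause for the case $\tau \in L_k$. If $\tau \notin \tk$, then $\tau \notin TAUT$, so $\tau$ must be a falsifiable formula with $D_k^w(\tau, y)$ satisfiable, that is, $\exists y (|y| \le k^c)\, F_1(\tau, y, w)$; and by the mutual exclusivity (\ref{h1}) --- true in ${\mathbf N}$ and hence provable in $\tpv$ --- this forces $\forall y (|y| \le k^c)\, \neg F_0(\tau, y, w)$, the error clause for the case $\tau \notin L_k$. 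Either way $\tau$ solves $\mbox{{\bf Err}}(TAUT, {\cal F})$, so ${\cal A}$ is a polynomial-time algorithm solving that task on all inputs for all $k$, contradicting Theorem \ref{4.1}; hence no such ${\cal B}$ exists, which gives the corollary for this $c$.

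I do not expect a genuine obstacle here: all the real content is in Theorems \ref{3.1} and \ref{4.1}, and what remains is the routine reduction above. The two points that need a little care are the parameter bookkeeping --- fixing $c$ so that $D_k^w$ meets both the size bound $k^{c^2}$ and the width bound $k^c$ of $\mbox{{\bf Cert}}(c)$, with $y$ handled as in the definition of ${\cal F}$ --- and the second case of the correctness argument, where satisfiability of $D_k^w(\tau,y)$ has to be converted into $\forall y\, \neg F_0(\tau, y, w)$ via (\ref{h1}). As in Theorem \ref{4.1}, one should also keep in mind that $k = n^{\delta}$ need not be an integer, which is handled by running the argument along a suitable infinite set of values of $n$.
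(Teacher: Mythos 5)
Your proposal is correct and follows essentially the same route as the paper: you reduce $\mbox{{\bf Err}}(TAUT,{\cal F})$ to $\mbox{{\bf Cert}}(c)$ via the canonical circuits computing $F_1(x,y,w)$, note that a time $2^{O(k)}$ algorithm is polynomial in $|\tk|=2^k$, perform the same two-case correctness check using the exclusivity condition (\ref{h1}), and invoke Theorem \ref{4.1}. The only (harmless) difference is presentational — you phrase it as a contradiction and spell out the parameter bookkeeping slightly more explicitly than the paper does.
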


\section{Proof systems with advice and with extra axioms}
\label{5}

The task {\bf Find} was formulated using the provability in a proof system
and in this section we develop a technical tool allowing us to move
from $\npcpp$ algorithms to proof systems.
We shall recall first the notion of 
a {\bf proof system with advice} as introduced by Cook-K.\cite[Def.6.1]{CooKra}. 
It is defined as the ordinary Cook-Reckhow proof system (cf.the introduction)
except that the binary relation $P(x,y)$ is decidable in polynomial time
using an advice string that depends only on the length of $x$ (the formula). 
We say that the advice
is polynomial iff its length is $|x|^{O(1)}$.
This concept
has some interesting properties; for example, in the classes of these proof systems
- with varying bounds on the size of advice strings - there exists an optimal one.
We refer the reader to \cite[Sec.6]{CooKra} and to subsequent \cite{BKM1,BKM2,BeyMul}
for further information.

Our aim in this section is to link proof systems with polynomial advice 
with proof systems with extra axioms, as defined below.
A sequence of formulas $\{\alpha_k\}_k$ will be called {\bf p-bounded} iff
$|\alpha_k| \le k^{O(1)}$ for all $k$.

\begin{definition}\label{5.1}
Let $P(x,y)$ be an ordinary Cook-Reckhow proof system.

\begin{enumerate}

\item For a tautology $\alpha$ the proof system $P+\alpha$ is defined as follows:

a string $\pi$ is a $(P+\alpha)$-proof of formula $\tau$ iff
$\pi$ is a $P$-proof of a disjunction of the form
\[
\bigvee_i \neg \alpha'_i\ \vee \ \tau
\]
where $\alpha'_i$ are arbitrary substitution instances of $\alpha$
obtained by substituting constants and variables for variables.

\item
For a p-bounded sequence $\{\alpha_k\}_k$ of tautologies define a string $\pi$ to
be an $(P+ \{\alpha_k\}_k)$-proof of formula $\tau$ iff it is a $(P+\alpha_k)$-proof
of $\tau$ for $k = |\tau|$.

\end{enumerate}
\end{definition}
We allow only substitutions of constants and variables in instances $\alpha_i'$
in part 1 as that makes sense for all proof systems (e.g. we do not have to discuss
various limitations on depth for constant depth Frege systems) and it suffices here.
Systems $(P+ \{\alpha_k\}_k)$ are not meant to genuinely formalize the informal
notion of proof systems with extra axioms; such systems should not pose
restrictions on which extra axioms can be used in proofs of which formulas. We
use them here only as a technical vehicle allowing us to move 
from proof systems with advice
to ordinary proof systems.

\bigskip

Note that while $P+\alpha$ is a Cook-Reckhow proof system,  
$P+ \{\alpha_k\}_k$ is generally not.
The following lemma is obvious as we can use the sequence $\{\alpha_k\}_k$
as advice strings to recognize $(P+ \{\alpha_k\}_k)$ - proofs.

\begin{lemma}\label{5.2}
Let $P$ be a Cook-Reckhow proof system. For every p-bounded sequence
$\{\alpha_k\}_k$
of tautologies $P+ \{\alpha_k\}_k$ is a proof system with polynomial advice
in the sense of Cook-K.\cite{CooKra}.

\end{lemma}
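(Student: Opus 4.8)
The plan is to read off the verification procedure for $(P+\{\alpha_k\}_k)$-proofs directly from Definition \ref{5.1} and to use the formula $\alpha_k$ itself as the advice string for formulas of length $k$. Since the sequence is p-bounded by hypothesis, $|\alpha_k|\le k^{O(1)}$, so this is polynomial advice in the sense of \cite{CooKra}, and --- crucially --- it depends only on the length $k$ of the proved formula, not on the proof. To make the endformula syntactically available I would fix the convention (implicit in Definition \ref{5.1}) that a $(P+\{\alpha_k\}_k)$-proof of a formula $\tau$ with $|\tau|=k$ is a code of a pair $\langle\psi,\rho\rangle$, where $\psi$ is a disjunction $\bigvee_i\neg\alpha'_i\vee\tau$ with each $\alpha'_i$ a substitution instance of $\alpha_k$ (obtained by substituting constants and variables for variables), and $\rho$ is a $P$-proof of $\psi$.

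First I would spell out the verifier. On input $\langle\tau,\pi\rangle$ it computes $k:=|\tau|$, requests the advice $\alpha_k$, parses $\pi$ as a pair $\langle\psi,\rho\rangle$, checks that $\psi$ is a disjunction whose last disjunct is $\tau$ and each of whose other disjuncts has the form $\neg\beta$ with $\beta$ a substitution instance of $\alpha_k$ (testing whether one formula arises from another by a simultaneous substitution of constants and variables for variables is a linear-time consistent-matching problem), and finally runs the polynomial-time $P$-verifier on $\langle\psi,\rho\rangle$. As $|\psi|,|\rho|\le|\pi|^{O(1)}$, the whole computation is polynomial in $|\tau|+|\pi|$, and the only non-uniform information used is $\alpha_k$; hence $P+\{\alpha_k\}_k$ is a polynomial-time relation with polynomial advice depending only on the length of the first argument.

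It then remains to verify that this relation defines a \emph{proof system}, i.e. proves exactly the tautologies. For completeness: if $\tau\in\mbox{TAUT}$ with $|\tau|=k$, take $\psi:=\tau$ (empty index set) --- or $\psi:=\neg\alpha_k\vee\tau$, a tautology since $\alpha_k$ is --- and let $\rho$ be a $P$-proof of $\psi$, which exists because $P$ is complete; then $\langle\psi,\rho\rangle$ is accepted. For soundness: if $\langle\psi,\rho\rangle$ is accepted then $P(\psi,\rho)$ holds, so by soundness of $P$ the formula $\psi=\bigvee_i\neg\alpha'_i\vee\tau$ is a tautology; each $\alpha'_i$ is a substitution instance, by constants and variables, of the tautology $\alpha_k$ and is therefore itself a tautology, so every $\neg\alpha'_i$ is unsatisfiable, whence every truth assignment --- all of which satisfy $\psi$ --- must satisfy the disjunct $\tau$; thus $\tau\in\mbox{TAUT}$. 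I do not expect a genuine obstacle, as the lemma is bookkeeping; the one point deserving care is exactly the one highlighted above, namely keeping the advice dependent on $|\tau|$ alone, which is why the disjunction $\psi$ (carrying all the substitution data) is placed inside the proof rather than reconstructed by the verifier --- and one should also note that $P+\{\alpha_k\}_k$ need not be a Cook--Reckhow system without advice, since the sequence $\{\alpha_k\}_k$ may not be computable, which is the whole point of the construction.
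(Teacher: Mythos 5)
Your proof is correct and takes essentially the same approach as the paper, which merely remarks that the lemma is obvious because the sequence $\{\alpha_k\}_k$ itself can serve as the advice strings for recognizing $(P+\{\alpha_k\}_k)$-proofs. Your write-up just fills in the bookkeeping the paper omits, including the reasonable convention of packaging the disjunction together with its $P$-proof so the verifier need not guess the endformula.
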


In Section \ref{tasks} we used informally the notion of a decent proof system,
meaning a proof system that can perform efficiently a few simple manipulations with proofs. 
We shall use the formalization of this notion from \cite[Sec.2]{Kra-sat}.

In the following $\sat(u,x,v)$ are formulas 
for $k \geq 1$ and suitable $r = k^{O(1)}$ 
with 
$u = (u_1,\dots, u_k)$, $x = (x_1,\dots, x_k)$ and
$v = (v_1, \dots, v_r)$ such that
for all $a, \varphi \in \kk$ it holds that: 
\begin{itemize}
\item 
{\em $\sat(a,\varphi, v) \in \mbox{TAUT}$ iff
$a$ is a truth assignment satisfying formula $\varphi$.}
\end{itemize}
The extra variables $v$ are used to compute the truth value, as in the $\np$-completeness
of SAT.

A proof system (ordinary or with advice) 
$P$ is {\bf decent} iff the following tasks
can be performed by polynomial time algorithms\footnote{Polynomially
bounded functions would suffice for us here but such a weakening would not
put more of the usual proof systems into the class and so we just stick with the definition
from \cite{Kra-sat}.}:

\begin{enumerate}

\item[D1] From a $P$-proof $\pi$ of formula $\psi(x)$
and a truth assignment $a$ to variables $x$
construct a $P$-proof of $\psi(a)$.

\item[D2]
Given a true sentence $\psi$ (i.e. no variables)
construct its $P$-proof.

\item[D3]
Given $P$-proofs $\pi_1$ of $\psi$ and $\pi_2$ of 
$\psi \rightarrow  \eta$ construct a proof of $\eta$.

\item[D4] 
Given a formula $\varphi(u_1,\dots, u_n)$
and a $P$-proof of formula $\sat(u, \varphi, v)$
with variables $u, v$
construct a $P$-proof of $\varphi$.

\end{enumerate}
Conditions D1-3 are easy to verify for many of the usual proof
systems (e.g. Frege systems mentioned in the introduction or resolution).
The algorithm for condition D4 is defined by
induction on the number of connectives in $\varphi$, cf.\cite[Chpt.9]{kniha}.

\begin{lemma}\label{5.3}
Let $Q$ be a proof system with polynomial advice and $P$ a decent 
Cook-Reckhow proof system.
Then for every constant $c \geq 1$ there exists
a p-bounded sequence $\{\alpha_k\}_k$ of tautologies and $d \geq 1$
such that:

Any tautology $\tau$ having a $Q$-proof of size $\le |\tau|^c$
has an $(P+ \{\alpha_k\}_k)$-proof of size $\le |\tau|^d$.

\end{lemma}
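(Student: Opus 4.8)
The plan is to take $\alpha_k$ to be the propositional reflection principle for $Q$, restricted to formulas of size $k$ and to $Q$-proofs of size at most $k^{c}$, with the advice string of $Q$ for input length $k$ hardwired into it as constants; a short $Q$-proof of a tautology $\tau$ is then converted into a short $(P+\alpha_{|\tau|})$-proof of $\tau$ by substituting that proof back into the reflection axiom and invoking the decency of $P$.

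In detail, let $\{w_n\}_n$ be the advice sequence of $Q$, so $|w_n| \le n^{a}$ for a constant $a$ and $Q(x,y)$ is decidable in time $|x|^{O(1)}$ given $w_{|x|}$. For each $k$ the relation ``$\pi$ is a $Q$-proof of $\varphi$'', for $|\varphi| = k$, $|\pi| \le k^{c}$ and advice $w_k$, is computed by a circuit of size $k^{O(1)}$; let ${\sf Prf}^{k}_{Q}(\pi,\varphi)$ be a formula of size $k^{O(1)}$ naturally equivalent to this circuit with $w_k$ substituted in (it introduces auxiliary Tseitin variables, which I suppress below). Put
\begin{equation}\label{m0}
\alpha_k\ :=\ {\sf Prf}^{k}_{Q}(\pi,\varphi)\ \rightarrow\ \sat(a,\varphi,v)\ ,
\end{equation}
a formula in the variables $\pi$ (of length $\le k^{c}$), $\varphi$ (of length $k$), $a$ (of length $k$), $v$, and the suppressed Tseitin variables. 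This is a tautology: if the antecedent is satisfied then $\pi$ codes a genuine $Q$-proof of the formula coded by $\varphi$, hence by soundness of $Q$ that formula is a tautology, so every assignment $a$ satisfies it and $\sat(a,\varphi,v)$ holds; otherwise the implication holds vacuously. Since ${\sf Prf}^{k}_{Q}$ and $\sat$ have size $k^{O(1)}$, the sequence $\{\alpha_k\}_k$ is p-bounded.

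Now let $\tau$ be a tautology with $|\tau| = k$ and a $Q$-proof $\pi_0$ with $|\pi_0| \le k^{c}$, and let $x_1, \dots, x_n$ ($n \le k$) be its variables. Let $\alpha'_k$ be the substitution instance of $\alpha_k$ obtained by substituting the bits of $\pi_0$ for $\pi$, the bits of $\tau$ (as a string) for $\varphi$, the variable $x_i$ for $a_i$ when $i \le n$, and leaving the remaining variables as variables. Then $\alpha'_k$ has the form $A \rightarrow S$, where $A := {\sf Prf}^{k}_{Q}(\pi_0,\tau)$ is a variable-free formula that is \emph{true} (as $\pi_0$ really is a $Q$-proof of $\tau$) and $S := \sat(x_1,\dots,x_n,a_{n+1},\dots,a_k,\tau,v)$. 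By D2 there is a $P$-proof of $A$ of size $k^{O(1)}$. The inductive construction behind D4, applied to the fixed formula $\tau$, yields a $P$-proof of size $k^{O(1)}$ of the implication $S \rightarrow \tau$; this is the standard fact that once the subformula-value variables are correctly constrained the value of $\tau$ is computed correctly, and it does not use that $\tau$ is a tautology. Combining the two $P$-proofs by the cut/modus ponens step D3 and the simple Boolean manipulations that a decent proof system provides (recall conditions D1--D4 and the surrounding discussion), one obtains a $P$-proof of size $k^{O(1)}$ of $\neg(A \rightarrow S)\vee\tau = \neg\alpha'_k\vee\tau$, which by Definition \ref{5.1} is a $(P+\{\alpha_k\}_k)$-proof of $\tau$. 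Choosing $d$ so large that this size bound is at most $k^{d} = |\tau|^{d}$ completes the argument.

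I expect the care to be concentrated in two places. First, in the precise propositional formalization making $\alpha_k$ literally a tautology: the point is that soundness of $Q$ forces the antecedent ${\sf Prf}^{k}_{Q}$ to be satisfiable only for $\varphi$ coding a tautology, in which case the $\sat$ consequent is identically true, so the universal quantification implicit in the free auxiliary variables $v$ does no harm. Second, in extracting from D4 not merely a $P$-proof of $\tau$ from a $P$-proof of $\sat(u,\tau,v)$ but the $P$-provability of the implication $S \rightarrow \tau$ itself, together with the bookkeeping that assembles the final disjunction from this implication and the $P$-proof of the true sentence $A$; both are routine for the usual decent systems, but this is where the work of the proof lies.
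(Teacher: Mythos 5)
Your proposal is correct and follows essentially the same route as the paper: $\alpha_k$ is the propositional reflection principle for $Q$ with the advice $w_k$ hardwired, the true closed antecedent is proved via D2, modus ponens (D3) yields the $\sat$ consequent, and the D4 construction recovers $\tau$ itself. The only (harmless) difference is bookkeeping: you substitute the variables of $\tau$ for the assignment atoms and explicitly assemble the disjunction $\neg\alpha_k'\vee\tau$ required by Definition \ref{5.1}, whereas the paper keeps the assignment atoms fresh, invokes D4 as stated, and leaves the conversion to the official disjunction form implicit.
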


\begin{proof}

Assume that $Q$ uses polynomial size advice string $w_k$ for formulas of size $k$.
For $k \geq 1$ denote by $\prov(x,y,t,s)$ a propositional formula 
such that:
\begin{itemize}

\item
$\prov$ has
$k$ atoms $x=(x_1,\dots,x_k)$ for bits of a formula, $k^c$ atoms $y = (y_1,\dots,y_{k^c})$
for bits of a $Q$-proof,
$k^c$ atoms $t = (t_1, \dots, t_{k^c})$
for bits of an advice and $k^{O(1)}$ atoms $s = (s_1, \dots, s_{k^{O(1)}})$
for bits of the computation
of the truth value of $Q(x,y)$ with advice $t$, 

\item For size $k$ formula $\varphi$ and a $k^c$ size strings $\pi$ and $w$:
$\prov(\varphi, \pi, w, s) \in \mbox{SAT}$ 
iff $Q(\varphi,\pi)$ with advice $w$ is true.
\end{itemize}
Then take for $\alpha_k$ the formula with variables $x,y,z,s,v$
\begin{equation}\label{m0} 
\prov(x,y,t/w_k,s)\ \rightarrow\ \sat(x,z,v)\ 
\end{equation}
where $w_k$ is the string used by $Q$ for size $k$
formulas. The formula expresses the soundness of
$Q$ and hence it is a tautology. 
Its total size is $k^{O(c)}$.

\bigskip

Let $\varphi$ be a size $k$ formula with variables
among $z=(z_1,\dots,z_k)$ and
having a size $\le k^c$ $Q$-proof
$\pi$. Let $e$ be bits of the computation of $Q(\varphi,\pi)$ with
advice $w_k$.

Take the following substitution instance of $\alpha_k$:
\begin{equation}
\prov(\varphi, \pi, w_k, e) \ \rightarrow\ \sat(\varphi, z, v)
\end{equation}

\medskip
\noindent
{\bf Claim:}{\em 
For some $d \geq 1$ depending only on $c \geq 1$ and $P$ the formula
$\varphi$ has a $(P+\alpha_k)$-proof of size $\le k^d$.}

\medskip

We shall use the decency of $P$. By the choice of
$\pi$ and $e$ the sentence $\prov(\varphi, \pi, w_k, e)$ 
is true and hence has, by D2, a size $k^{O(c)}$ $P$-proof.
Then, using D3, use modus ponens to derive in size $k^{O(c)}$
the formula
\[
\sat(\varphi, z, v)\ .
\] 
D4 then allows to derive in $P$ the formula $\varphi$, in size $k^{O(1)}$.
The total size of the $P$-proof is $k^{O(c)}$.

\end{proof}

Note that we have not used the decency condition D1 explicitly;
it's role is replaced here by the definition of the system $P + \alpha_k$
which takes as axioms all substitution instances of $\alpha_k$.

Formulas $\alpha_k$ depend not only on $k$ and $w_k$ but also on the bound $k^c$ to
the length of $y$. This is the reason why we cannot simply say that
$P + \{\alpha_k\}_k$ simulates $Q$ (it simulates only\footnote{$P$ augmented by all
such formulas $\alpha_k$, constructed for all $k$, $w_k$ and all $c \geq 1$, does
simulate $Q$ as by the proof of the claim $d = O(c)$.}
$Q$-proofs of size
at most $k^c$).

\section{The partial definability of tasks {\bf Cert} and {\bf Find}}
\label{6}

We will need the following notion.
For a complexity class $\mathcal X$ and a language $L$ define
the property that {\bf $L$ is infinitely often in ${\mathcal X}$},
denoted $L \in_{i.o.} {\mathcal X}$, iff there exists $L'\in {\mathcal X}$
such that
\[
L \cap \kk\ =\ L' \cap \kk
\]
for infinitely many lengths $k$.
Recall the definition of the class $\npcpp$ at the beginning of
Section \ref{4}.

\bigskip

The following consequence of Conjecture \ref{2.1} 
was noted at the end of \cite[Sec.30.2]{k2} and it
uses an idea linking the output/input ratio of proof complexity
generators with the unprovability of circuit lower bounds
due to Razborov \cite{Raz95}, quite similar to the reasoning 
in Razborov-Rudich \cite{RR}.

\begin{lemma}\label{2.3}
\hfill

Assume that Conjecture \ref{2.1} holds
and that an exponentially hard one-way permutation exists.
Then for every $L \in \neco$:
\[
L \in_{i.o.}  \npcpp\ .
\]
In particular, $TAUT \in_{i.o.} \np/poly$.
\end{lemma}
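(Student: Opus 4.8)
The plan is to derive Lemma \ref{2.3} by contraposition from Theorem \ref{3.1}, in complete parallel to the proof of Theorem \ref{4.1}. Fix a language $L \in \neco$, so that (as recorded just before Theorem \ref{4.1}) the set $R^L = \{L_k \mid k \geq 1\}$ lies in $\np$ and is defined by an $L_{PV}$ formula $\exists v\,(|v| \le |z|^d)\, R^L_0(z,v)$. Suppose toward a contradiction that $L \notin_{i.o.} \npcpp$, i.e. for every $\npcpp$ algorithm $({\cal F}, \{w_k\}_k)$ and all sufficiently large $k$, ${\cal F}$ with advice $w_k$ fails to compute the characteristic function of $L$ on $\kk$ correctly on at least one input — equivalently, the search task $\mbox{{\bf Err}}(L, {\cal F})$ has a solution for all large $k$. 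The heart of the matter is that the witness to such an error is \emph{uniformly} extractable: I would show that some fixed deterministic polynomial-time algorithm $\cal A$ solves $\mbox{{\bf Err}}(L, {\cal F})$ for the specific triple $\cal F$ built from the one-way permutation, which then contradicts Theorem \ref{3.1}.

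Concretely, take an exponentially hard one-way permutation $h$ with hard bit $B$ (guaranteed by hypothesis), let $f(u) = B(h^{(-1)}(u))$ be the associated $\npco$ function with unique witnesses and hardness $H_f(\ell) = 2^{\ell^{\Omega(1)}}$, and let $\delta > 0$ be the constant from Theorem \ref{3.1}. Set $k := n^\delta$, $c := \delta^{-1}$, and define the triple ${\cal F} = (F_0, F_1, c)$ exactly as in (\ref{h2}), so that $\nw : \nn \to \mm = \{0,1\}^{2^k}$ and on input $x \in \kk$ the predicates $F_a(x,y,w)$ assert $h(y) = w(J_x) \wedge B(y) = a$; here the would-be advice string $w$ of length $n$ plays the role of the seed of $\nw$. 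Now here is the key point where the assumption $L \notin_{i.o.} \npcpp$ is used: under that assumption, for \emph{every} seed $w$ of length $n$ (with $n$ large), the string $\nw(w) \in \mm$ differs from $L_k$; hence there is some coordinate $x \in [m] = \kk$ witnessing an $\mbox{{\bf Err}}(L, {\cal F})$-error, namely $\forall y\,(|y| \le k^c)\,\neg G(w, L_k, x, y)$. I would then argue that such an $x$ is found by a fixed polynomial-time (in $|L_k| = 2^k$, hence exponential in $k$, but that is the correct scale — compare the paragraph after Theorem \ref{4.1}) algorithm $\cal A$: for instance $\cal A$ can, given $1^{(k)}$, $L_k$, the witness $v$ for $L_k \in R^L$, and $w$, simply search over all $x \in \kk$ and all candidate witnesses $y$, or more efficiently exploit the $\neco$ structure of $L$ together with the unique-witness property of $f$. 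The uniformity of the design ((\ref{e2}): $J_x$ is polynomial-time computable from $x$ and $1^{(n)}$) guarantees $\cal A$ runs in time polynomial in its input length.

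With such an $\cal A$ in hand, I would form the universal $L_{PV}$ sentence $\ufla$ of the proof of Theorem \ref{4.1} — the universal closure of $C \to D$ with $\cal A$ represented by an $L_{PV}$ function symbol — and observe, via Claim 1 there, that $\ufla$ is \emph{true}, since by the assumption $L \notin_{i.o.} \npcpp$ the algorithm $\cal A$ does solve $\mbox{{\bf Err}}(L, {\cal F})$ for all large $k$. But substituting the term ${\cal A}(1^{(k)}, z, v, w)$ for $x$ and prefixing $\exists x\,(x \in [m])$ turns $\ufla$ into (a $\tpv$-consequence of) the universal closure of $A \to B$ from Theorem \ref{3.1}; truth of $\ufla$ would make that closure true, hence ($A \to B$ being universal) provable in $\tpv$, contradicting Theorem \ref{3.1}. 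Therefore the assumption fails and $L \in_{i.o.} \npcpp$. The final sentence, $TAUT \in_{i.o.} \np/\mbox{poly}$, follows by taking $L = TAUT$ (which lies in $\neco$, as noted before Theorem \ref{4.1}) and discarding the co-$\np$ half of the advice algorithm. The step I expect to require the most care is pinning down the deterministic polynomial-time algorithm $\cal A$ that extracts the error witness $x$ — in particular verifying that the search over coordinates and witnesses can be done within time polynomial in the input length $2^{O(k)}$ and that it genuinely produces a correct $\mbox{{\bf Err}}(L,{\cal F})$-solution whenever $\nw(w) \neq L_k$; once $\cal A$ is fixed, the reduction to Theorem \ref{3.1} is essentially the reduction already carried out in the proof of Theorem \ref{4.1}.
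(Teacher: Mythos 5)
There is a genuine gap, and it stems from a structural misreading of what is needed. Your argument never actually invokes Conjecture \ref{2.1}: you work only from Theorem \ref{3.1} (plus the one-way permutation), and Theorem \ref{3.1} is an \emph{unprovability} statement, not a statement about what is true. To contradict it you must exhibit a \emph{true universal} $L_{PV}$ sentence, i.e.\ a concrete polynomial-time function symbol $\cal A$ making $\ufla$ true. Your hypothesis $L \notin_{i.o.} \npcpp$ only guarantees that for all large $k$ and every advice $w$ an erroneous coordinate $x$ \emph{exists}; it gives no feasible way to \emph{find} one, and existence alone leaves $A \rightarrow B$ a true $\forall\exists\forall$ sentence that need not be $\tpv$-provable. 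The step you flag as "requiring the most care" is in fact impossible: checking whether a given $x$ is an error means deciding $\forall y\, \neg F_a(x,y,w)$, which for the triple (\ref{h2}) amounts to computing $h^{-1}(w(J_x))$ on an $\ell$-bit string with $\ell = n^{1/3} = k^{1/(3\delta)}$; brute force costs $2^{\ell}$, superpolynomial in the input length $2^{O(k)}$ once $\delta < 1/3$, and any cleverer method would invert the exponentially hard permutation. Indeed the paper's own Theorem \ref{4.1} (which holds under the one-way permutation hypothesis alone) says precisely that \emph{no} deterministic polynomial-time algorithm solves $\mbox{{\bf Err}}(L,{\cal F})$ for all large $k$, so the $\cal A$ your plan requires provably does not exist. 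Had your route worked, Lemma \ref{2.3} would follow from the one-way permutation assumption alone, without Conjecture \ref{2.1} --- which is a strong signal that the reduction is miswired.

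The intended proof is direct and much shorter: apply Conjecture \ref{2.1} itself (as a statement about the actual range of $\nw$) to the $\np$ sets $R = \{L_k \mid k \geq k_0\}$, whose membership witnesses are the collected $\nee\cap\mbox{co}\nee$ witnesses of all $2^k$ inputs. The conjecture forces $L_k \in Rng(\nw)$ for infinitely many $k$; for such $k$ any seed $a$ with $\nw(a) = L_k$ serves as the polynomial advice, since by (\ref{e2}) computing $L$ on $x \in \kk$ reduces to evaluating the $\npco$ function $f$ on $a(J_x)$. No appeal to Theorem \ref{3.1}, and no error-finding algorithm, is needed.
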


\begin{proof}

Take $\delta > 0$ from Conjecture \ref{2.1}.
Put $k := n^{\delta}$ and think of a string $b \in \mm$ as
of the truth-table of the characteristic function of the language $L$ 
on inputs of length $k$; denote it $L_k$ as in Section \ref{4}.

For any language $L$ in $\neco$ the set of strings 
$\{L_k\ |\ k \geq 1\}$
is in $\np$: the $\np$ witness can collect all $2^k$ $\nee$ witnesses
for each variable setting - this will have size $2^{O(k)}$ -
and check their validity.

In particular, if some $L \in \neco$ would determine the
truth-tables $L_k$ for $k = n^{\delta}$ such that
all but finitely many lie outside the range of $\nw$ we would get
a contradiction with Conjecture \ref{2.1}. Hence we get:

\medskip
\noindent
{\bf Claim:} {\em For infinitely many $n$, for $k = n^{\delta}$ and $m = 2^k$:
\[
L_k \in \mm \cap Rng(\nw)\ .
\]}
For $L_k \in Rng(\nw)$ let $a \in \nn$ be such that
$\nw(a) = L_k$. Then computing  $L$ on $i \in \kk$ amounts to computing
$f$ on $a(J_i)$.
But by the requirement (\ref{e2}) posed on $A_n$ the set 
$a(J_i)$ can be computed from $i$ and $a$ 
(taken as the advice string)
in time polynomial in $n$
and $f$ is an $\np \cap \mbox{co}\np$ function.

\end{proof}

This lemma has an immediate consequence for problem {\bf Cert}.

\begin{corollary}
Assume that Conjecture \ref{2.1} holds
and that an exponentially hard one-way permutation exists.

Then for some $c \geq 1$ the task $\mbox{{\bf Cert}}(c)$ 
is only partially defined
for infinitely many lengths $k \geq 1$,
i.e. there are inputs corresponding to $k$ for which
the problem has no solution.

\end{corollary}

We shall derive the same consequence for the task {\bf Find}
using the results of Section \ref{5}.
For a triple $\cal F$ as in (\ref{h1}) define 
a proof system with polynomial advice $Q(x,y)$ by:
\begin{itemize}
\item either $0 < |w|\le |x|^c \wedge  F_1(x,y,w)$,

\item or $w$ is the empty word and $y$ is a Frege proof of $x$,
\end{itemize}
thinking of $w$ as the advice. 
Note that condition (\ref{h1}) does not guarantee that $F_1(x,y,w)$
is a proof system for some $w$ but the second clause of the definition
of $Q(x,y)$
allows us to fall back on a Frege system by taking for
the advice the empty word.

Now let $\{w_k\}_k$ such that $|w_k| \le k^c$ be a sequence of advice words
defining a proof system with advice $Q$ (there exists at least one such:
the sequence of empty strings).
Let $P$ be any decent Cook-Reckhow proof system and let formulas $\alpha_k$
and constant $d \geq 1$ be those provided by Lemma \ref{5.3} for $c$ from $\cal F$.
Assume $c_0 \geq 1$ is such that $|\alpha_k|\le k^{c_0}$ and assume also w.l.o.g.
that $d \geq c_0$. 

Consider the task $Find(P,c_0,d)$. A solution for input $1^{(k)}$ and $\alpha_k$
is a size $k$ tautology $\beta$ and by the choice of $c_0, d$ it must be that
\[
\forall y (|y| \le k^c)\ \neg F_1(\beta, y, w_k)\ .
\]
But Conjecture \ref{2.1} implies analogously as above that for the triple $\cal F$
coming from $\nw$ there will be infinitely many lengths $k\geq 1$
and strings $w_k$ for which $\alpha_k$ is a tautology but no such 
$\beta$ exists. Hence we have the following statement.

\begin{theorem}
Assume that Conjecture \ref{2.1} holds
and that an exponentially hard one-way permutation exists.

Then for all decent Cook-Reckhow proof systems $P$ there are
constants $c_1 \geq c_0 \geq 1$ such that the task 
$\mbox{{\bf Find}}(P, c_0, c_1)$ has
no solution for infinitely many lengths $k \geq 1$.

\end{theorem}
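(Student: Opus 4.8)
The plan is to make precise the construction sketched just before the statement, instantiating the triple ${\cal F}$ by the Nisan--Wigderson one and choosing the advice words so that $Q$ computes $TAUT$ at exactly the lengths delivered by Conjecture \ref{2.1}. Concretely, I would take $\delta>0$ from Conjecture \ref{2.1}, put $k:=n^{\delta}$, $m:=2^{k}$, $c:=\delta^{-1}$, let $h$ be an exponentially hard one-way permutation with hard bit $B$ so that $f(u):=B(h^{(-1)}(u))$ is an $\npco$ function with unique witnesses and $H_f(\ell)=2^{\ell^{\Omega(1)}}$, and let ${\cal F}=(F_0,F_1,c)$ be given by $F_a(x,y,w):=[h(y)=w(J_x)\wedge B(y)=a]$ as in the proof of Theorem \ref{4.1}. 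I then form the proof system with advice $Q$ as in the paragraph before the statement, apply Lemma \ref{5.3} to $Q$ and $c$ to obtain a p-bounded sequence $\{\alpha_k\}_k$ and a constant $d$, fix $c_0\ge 1$ with $|\alpha_k|\le k^{c_0}$ and $d\ge c_0$, and set $c_1:=d$ (or $d+1$ if one wants to honour the strict inequality in the definition of {\bf Find}).

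The key point is a dictionary between the bit pattern of $\nw(w)$ and the two conditions at stake. For this ${\cal F}$ and any advice word $w$ of the correct length one has $\exists y\,(|y|\le k^{c})\,F_1(\varphi,y,w)$ iff $f(w(J_{\varphi}))=1$ iff the $\varphi$-th bit of $\nw(w)$ equals $1$, where a string $\varphi\in\kk$ is read as an index into $[m]$. Hence \emph{(a)} the soundness tautology $\alpha_k$ of $Q$ with advice $w_k$ is genuinely a tautology precisely when $\nw(w_k)$ lies bitwise below the length-$m$ truth table $\tk$ of $TAUT\cap\kk$; and \emph{(b)} by the contrapositive of Lemma \ref{5.3} applied to $Q$-proofs of size $\le k^{c}$, a solution $\beta$ to $Find(P,c_0,c_1)$ on input $(1^{(k)},\alpha_k)$ is a size-$k$ tautology with $\forall y\,(|y|\le k^{c})\,\neg F_1(\beta,y,w_k)$, i.e.\ a tautology whose $\beta$-th bit of $\nw(w_k)$ is $0$, so $\beta$ witnesses $\nw(w_k)\ne\tk$. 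Consequently, as soon as $\nw(w_k)=\tk$ the pair $(1^{(k)},\alpha_k)$ is a legitimate instance of the promise task {\bf Find} and that instance has no solution.

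It remains to arrange $\nw(w_k)=\tk$ for infinitely many $k$, and here I would reuse the Claim established inside the proof of Lemma \ref{2.3}, applied with $L:=TAUT\in\neco$: under Conjecture \ref{2.1} together with the existence of an exponentially hard one-way permutation, $\tk\in\mm\cap Rng(\nw)$ for infinitely many $n$ with $k=n^{\delta}$. For each such $k$ fix $w_k$ to be a preimage $a\in\nn$ with $\nw(a)=\tk$ --- such a sequence of advice words need not be computable, which is harmless for a proof system with advice --- and for every remaining $k$ let $w_k$ be the empty word, so that $Q$ reverts to a Frege system, keeping $Q$ sound and complete overall and every $\alpha_k$ a tautology. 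Then on the infinitely many ``good'' lengths $k$ the input $(1^{(k)},\alpha_k)$ is a valid instance of $Find(P,c_0,c_1)$ with no solution, which is the assertion of the theorem.

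The only step I expect to require genuine care is the bookkeeping in the dictionary above: checking that soundness of $Q$ corresponds to $\nw(w_k)\le\tk$ bitwise while the mere existence of a {\bf Find}-solution corresponds to the strict failure $\nw(w_k)<\tk$, so that the single equality $\nw(w_k)=\tk$ at once makes the instance legal and kills every candidate solution --- together with the harmless slack between the ``$<k^{c_1}$'' in the definition of {\bf Find} and the ``$\le k^{d}$'' furnished by Lemma \ref{5.3}. Everything of real substance has already been imported, from Conjecture \ref{2.1} (equivalently Theorem \ref{3.1}) and from the simulation count of Lemma \ref{5.3}.
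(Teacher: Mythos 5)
Your proposal is correct and follows essentially the same route as the paper: instantiate ${\cal F}$ by the Nisan--Wigderson construction, pass to the advice proof system $Q$ and the soundness formulas $\alpha_k$ of Lemma \ref{5.3}, and invoke the Claim inside Lemma \ref{2.3} to get infinitely many $k$ with $\tk \in Rng(\nw)$, for which the instance $(1^{(k)},\alpha_k)$ is legal but admits no solution. You are in fact slightly more careful than the paper on two minor points --- the bitwise dictionary between soundness of $\alpha_k$ and $\nw(w_k)\le\tk$, and the slack between the strict bound $<k^{c_1}$ in {\bf Find} and the bound $\le k^{d}$ of Lemma \ref{5.3} --- both handled correctly.
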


\section{Disjoint $\np$ pairs}
\label{pairs}

Let $(U,V)$ and $(A,B)$ be two pairs of disjoint subsets of $\uu$.
A {\bf reduction} of $(A,B)$ to $(U,V)$ is a function $f : \uu \rightarrow \uu$
such that for all $u$:
\[
u \in A \rightarrow f(u) \in U\ \wedge\ 
u \in B \rightarrow f(u) \in V\ .
\]
It is (non-uniform) {\bf p-reduction} if $f$ is (non-uniform) p-time.

Disjoint $\np$ pairs appear quite naturally in several places of 
proof complexity. Most notably, the reflection principle for a
proof system just asserts that two $\np$ sets (that of formulas with
bounded size proofs and of falsifiable formulas) are 
disjoint. A particularly elegant form of this observation was found by Razborov 
\cite{Raz-pairs} in the notion of the canonical pair of a proof system.
Shadowing the relation of the provability of reflection principles to simulations,
a similar one exists between the provability of disjointness of such pairs and
simulations. 
We refer the reader to \cite{Pud-survey} for more background.

Given two pairs of disjoint sets
$(U,V)$ and $(A,B)$ and a constant $c \geq 1$
consider the {\bf search task} $\mbox{{\bf Pair}}_{U,V}^{A,B}(c)$:

\begin{itemize}

\item input: $1^{(k)}$ and
a circuit $C$ with $k$ inputs, several outputs
and of size at most $k^c$

\item required output: a string $u \in \kk$ such that
\[
u \in A \wedge f(u) \notin U\ \mbox{ or }\  
u \in B \wedge f(u) \notin V\ .
\]

\end{itemize}
In other words, the output string $u$ certifies that circuit 
$C$ is not a reduction
of $(A, B)$ to $(U,V)$ on $\kk$.

\bigskip

Take a triple $\cal F$ as in (\ref{h1}) and define $U$ and $V$
to be the sets of pairs $(x,z)$ such that $|z|\le |x|^c$ and
$\exists y (|y|\le |x|^c)F_0(x,y,z)$ or 
 $\exists y (|y|\le |x|^c)F_1(x,y,z)$, respectively.
 
For a disjoint pair $A, B$ of sets such that $A \in \neco$ take for language
$L$ on $\kk$ simply $A$. 
For $w$ of size $\le k^c$ consider circuit $C_w$ that takes size $k$ input $x$
and outputs the pair $(x,w)$; note that $|C_w| \le k^{c+1}$ for $k >> 0$.
Then a solution to 
$\mbox{{\bf Pair}}_{U,V}^{A,B}(c+1)$ for input $1^{(k)}$ and
$C_w$ is also a solution to 
$Err(L,{\cal F})$. Hence Theorem \ref{4.1} implies the following statement.

\begin{theorem}
Assume that an exponentially hard one-way permutation exists.

Then there are two disjoint $\np$ sets $U, V$ and $c \geq 1$ such that for any pair
$A, B$ of disjoint sets such that 
$A \in \neco$ the task $\mbox{{\bf Pair}}_{U,V}^{A,B}(c)$
is not solvable by a deterministic time $2^{O(k)}$ algorithm.

\end{theorem}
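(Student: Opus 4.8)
The plan is to deduce the statement from Theorem \ref{4.1} by the reduction indicated in the paragraph just before it; I do not expect any essentially new difficulty, so the main work is to make that reduction precise. First I would fix an exponentially hard one-way permutation $h$ (available by hypothesis) and let ${\cal F}=(F_0,F_1,c)$ be the $\npcpp$ triple built from $h$ and the Nisan--Wigderson generator $\nw$ exactly as in the proof of Theorem \ref{4.1}, see (\ref{h2}). The point I would stress is that this $\cal F$ is manufactured once and for all and does not depend on any language: by Claim~2 in that proof (which goes through Theorem \ref{3.1}), for \emph{every} $L\in\neco$ there is no polynomial time algorithm solving $Err(L,{\cal F})$ on all inputs for all large $k$. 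Using $\cal F$ I would set
\[
U\ :=\ \{(x,z)\ :\ |z|\le|x|^c\ \wedge\ \exists y\,(|y|\le|x|^c)\,F_0(x,y,z)\}
\]
and $V$ the same with $F_1$ in place of $F_0$; these are two disjoint $\np$ sets by the exclusive-or condition (\ref{h1}). The claim is then that the theorem holds with this $(U,V)$ and with the constant $c+1$.

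Given any two disjoint sets $A,B$ with $A\in\neco$, I would put $L:=A$ and, for each advice word $w$ with $|w|\le k^c$, take $C_w$ to be the circuit on $k$-bit inputs $x$ that outputs $(x,w)$ (copying $x$ and hard-wiring $w$); since $|C_w|\le k^{c+1}$ for all large $k$, $C_w$ is a legal input to $\mbox{{\bf Pair}}_{U,V}^{A,B}(c+1)$. The core step is to verify that, reading $A_k$ (the truth table of $A$ on $\kk$, in the notation of Section \ref{4}) for $z$, any solution of $\mbox{{\bf Pair}}_{U,V}^{A,B}(c+1)$ on input $(1^{(k)},C_w)$ is automatically a solution of $Err(A,{\cal F})$ on any input of the form $1^{(k)}, A_k, v, w$. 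Indeed a string $u\in\kk$ certifying that $C_w$ is not a reduction of $(A,B)$ to $(U,V)$ satisfies ``$u\in A$ and $(u,w)\notin U$'' or ``$u\in B$ and $(u,w)\notin V$'', and since $|w|\le k^c$ one may use (\ref{h1}) to rewrite ``$(u,w)\notin U$'' as ``$u$ has no short witness under $F_1$ and advice $w$'' and similarly for $V$; together with $A\cap B=\emptyset$ and $L=A$ this is exactly the error condition defining $Err(A,{\cal F})$ at $u$. Matching up which of $F_0,F_1$ records membership on which side is the only bookkeeping involved, and it is purely mechanical once (\ref{h1}) is in hand.

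With this in place, suppose some deterministic algorithm solved $\mbox{{\bf Pair}}_{U,V}^{A,B}(c+1)$ on all inputs for all $k\ge 1$ in time $2^{O(k)}$. Composing it with the trivial preprocessing $w\mapsto C_w$ (computable in $k^{O(1)}$ steps) would give a deterministic algorithm ${\cal A}$ solving $Err(A,{\cal F})$ on all inputs for all large $k$ and running in time $2^{O(k)}$; as every $Err$ input contains the $2^k$-bit string $A_k$, this running time is polynomial in the input length, so ${\cal A}$ is a polynomial time solver. This contradicts Theorem \ref{4.1} applied with $L=A\in\neco$ (equivalently, $\ufla$ would then be true). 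Hence no such algorithm exists, which is the assertion with $c+1$ in place of the ``$c$'' named in the statement. I would also remark, to be safe, that if some size-$\le k^{c+1}$ circuit happened to be a genuine reduction of $(A,B)$ to $(U,V)$ then the Pair task simply has an instance with no solution and is a fortiori unsolvable; and that $(U,V)$ and the constant depend only on $h$ and $\nw$, so one choice works uniformly for all admissible $(A,B)$.

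The main obstacle is really not in this proof at all: the hard content is the proof-complexity lower bound that makes $Err(L,{\cal F})$ hard, resting on the unprovability in $\tpv$ of the implication of Theorem \ref{3.1}, and that is exactly what Theorem \ref{4.1} packages for me. Within the present argument the only things needing care are the (\ref{h1})-bookkeeping in the reduction step, the size estimate $|C_w|\le k^{c+1}$, and the running-time accounting just given. The result is thus a direct analogue, for disjoint $\np$ pairs, of the Corollary to Theorem \ref{4.1} concerning $\mbox{{\bf Cert}}$.
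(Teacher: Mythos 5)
Your proposal is correct and follows essentially the same route as the paper: the same $L$-independent triple ${\cal F}$ from the proof of Theorem \ref{4.1}, the same sets $U,V$, the same circuits $C_w$ with $|C_w|\le k^{c+1}$, and the same observation that a solution to the pair task on input $C_w$ is a solution to $Err(A,{\cal F})$, with the $2^{O(k)}$ running time being polynomial in the $2^k$-bit $Err$-input. The one piece of ``bookkeeping'' you defer does need a sign flip: with $U$ taken as the $F_0$-side, the clause ``$u\in A$ and $(u,w)\notin U$'' yields via (\ref{h1}) that $u$ \emph{has} a short $F_1$-witness, which is not an error of ${\cal F}$ on $L=A$; one must take $U$ to be the $F_1$-side and $V$ the $F_0$-side (the paper's own wording of the definition of $U,V$ contains the same swap).
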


The reader familiar with the canonical pairs of proof systems mentioned earlier
may note that these sets are in $\ee \subseteq \neco$ and thus the theorem applies to them.

\section{Concluding remarks}
\label{rema}

The role of Conjecture \ref{2.1} is rather ambivalent: it implies
that $\np \neq co\np$ (Lemma \ref{2.2}) but also 
that $TAUT \in _{i.o.} \np/poly$ (Lemma \ref{2.3}).
This is caused by the dual role of the Nisan-Wigderson generator; it is a source of
hard tautologies but also a strong proof system.
The reader should consider, before dismissing Conjecture \ref{2.1}
because of Lemma \ref{2.3}, how little contemporary complexity theory understands
about the power of non-uniform advice.

It would be interesting to have a variety of candidate combinatorial 
constructions $Q(P)$ of a proof system stronger than $P$, as discussed
in Section \ref{tasks}. At present only the construction of $iP$, the implicit $P$,
from \cite{Kra-implicit} applies to all proof systems and it is consistent
with the present knowledge that it indeed yields stronger proof systems.
An indirect plausible construction of $Q(P)$ may use the relation between proof systems
and first-order theories: take theory $T_P$ corresponding to $P$,
extend $T_P$ by $Con(T_P)$ (or in some other G\"{o}delian fashion)
getting $S$, and then take for $Q(P)$ the proof system simulating $S$
(cf.\cite{KP-jsl,kniha} about $T_P$ etc.).
But it is hardly combinatorially transparent.

The referee pointed out that problems $Cert$ and $Find$ seem to be very close
to each other and asked whether they could be, in fact, reduced to each
other. One can reduce $Find(P, c_0, c_1)$ to $Cert(c)$ for suitable $c$:
for $k \geq 1$ and a tautology $\alpha$ of size at most $k^{c_0}$
take for $D$ a circuit (canonically constructed) computing the
provability relation for $P + \alpha$ on formulas of size $k$ and
proofs of size at most $k^{c_1}$. The size of $D$ is polynomial in $k^{c_1}$
and so $D$ is a valid input to $Cert(c)$ for some $c_1 \le c = O(c_1)$.
A solution to $Cert(c)$ cannot certify that $D$ is unsound
and so it has to be a size $k$ formula certifying the
lower bound $k^c \geq k^{c_1}$
for $D$ and hence the $k^{c_1}$ lower bound for $P + \alpha$ too.
I do not know whether $Cert$ can be also reduced to $Find$ but I suspect it cannot
be: valid inputs to $Cert$ are allowed either to fail to be a sound
proof system or to fail to have short proofs while inputs to $Find$ 
cannot be unsound. 

\begin{acknowledgements}
I thank the anonymous referee for comments that lead to a better presentation
of the results.

\end{acknowledgements}

\affiliationone{Department of Algebra\\
Faculty of Mathematics and Physics\\
Charles University\\
Sokolovsk\' a 83, Prague 8\\ 
CZ - 186 75, The Czech Republic
\email{krajicek@karlin.mff.cuni.cz}}

\end{document}